\numberwithin{equation}{section}
\DeclareMathOperator{\td}{d\mspace{-1mu}}
\newtheorem{thm}{Theorem}[section]
\newtheorem{lem}{Lemma}[section]
\theoremstyle{definition}
\newtheorem{rem}{Remark}[section]
\newtheorem{dfn}{Definition}[section]
\begin{document}

\title[Bernstein functions and integral representations of means]
{Some Bernstein functions and integral representations concerning harmonic and geometric means}

\author[F. Qi]{Feng Qi}
\address[Qi]{School of Mathematics and Informatics, Henan Polytechnic University, Jiaozuo City, Henan Province, 454010, China}
\email{\href{mailto: F. Qi <qifeng618@gmail.com>}{qifeng618@gmail.com}, \href{mailto: F. Qi <qifeng618@hotmail.com>}{qifeng618@hotmail.com}, \href{mailto: F. Qi <qifeng618@qq.com>}{qifeng618@qq.com}}
\urladdr{\url{http://qifeng618.wordpress.com}}

\author[X.-J. Zhang]{Xiao-Jing Zhang}
\address[Zhang]{Department of Mathematics, School of Science, Tianjin Polytechnic University, Tianjin City, 300387, China}
\email{\href{mailto: X.-J. Zhang <xiao.jing.zhang@qq.com>}{xiao.jing.zhang@qq.com}}

\author[W.-H. Li]{Wen-Hui Li}
\address[Li]{Department of Mathematics, School of Science, Tianjin Polytechnic University, Tianjin City, 300387, China}
\email{\href{mailto: W.-H. Li <wen.hui.li@foxmail.com>}{wen.hui.li@foxmail.com}}

\begin{abstract}
It is general knowledge that the harmonic mean $H(x,y)=\frac2{\frac1x+\frac1y}$ and that the geometric mean $G(x,y)=\sqrt{xy}\,$, where $x$ and $y$ are two positive numbers. In the paper, the authors show by several approaches that the harmonic mean $H_{x,y}(t)=H(x+t,y+t)$ and the geometric mean $G_{x,y}(t)=G(x+t,y+t)$ are all Bernstein functions of $t\in(-\min\{x,y\},\infty)$ and establish integral representations of the means $H_{x,y}(t)$ and $G_{x,y}(t)$.
\end{abstract}

\subjclass[2010]{Primary 26E60; Secondary 26A48, 30E20, 44A10}

\keywords{Bernstein function; Harmonic mean; Geometric mean; Integral representation; Stieltjes function; induction; Cauchy integral formula; Stieltjes-Perron inversion formula}

\thanks{This paper was typeset using \AmS-\LaTeX}

\maketitle

\section{Introduction}

\subsection{Some definitions}
We recall some notions and definitions.

\begin{dfn}[\cite{mpf-1993, widder}]
A function $f$ is said to be completely monotonic on an interval $I\subseteq\mathbb{R}$ if $f$ has derivatives of all orders on $I$ and
\begin{equation}
(-1)^{n}f^{(n)}(t)\ge0
\end{equation}
for all $t\in I$ and $n\in\{0\}\cup\mathbb{N}$.
\end{dfn}

\begin{dfn}[\cite{Atanassov}]
If $f^{(k)}(t)$ for some nonnegative integer $k$ is completely monotonic on an interval $I\subseteq\mathbb{R}$, but $f^{(k-1)}(t)$ is not completely monotonic on $I$, then $f(t)$ is called a completely monotonic function of $k$-th order on an interval $I$.
\end{dfn}

\begin{dfn}[\cite{compmon2, minus-one}]
A function $f$ is said to be logarithmically completely monotonic on an interval $I\subseteq\mathbb{R}$ if its logarithm $\ln f$ satisfies
\begin{equation}
(-1)^k[\ln f(t)]^{(k)}\ge0
\end{equation}
for all $t\in I$ and $k\in\mathbb{N}$.
\end{dfn}

\begin{dfn}[\cite{Schilling-Song-Vondracek-2010, widder}]
A function $f:I\subseteq(-\infty,\infty)\to[0,\infty)$ is called a Bernstein function on $I$ if $f(t)$ has derivatives of all orders and $f'(t)$ is completely monotonic on $I$.
\end{dfn}

\begin{dfn}[{\cite{Schilling-Song-Vondracek-2010}}]\label{Stieltjes-dfn}
A Stieltjes function is a function $f:(0,\infty)\to[0,\infty)$ which can be written in the form
\begin{equation}\label{dfn-stieltjes}
f(x)=\frac{a}x+b+\int_0^\infty\frac1{s+x}{\td\mu(s)},
\end{equation}
where $a,b$ are nonnegative constants and $\mu$ is a nonnegative measure on $(0,\infty)$ such that $\int_0^\infty\frac1{1+s}\td\mu(s)<\infty$.
\end{dfn}

\begin{dfn}[\cite{psi-proper-fraction-degree-two.tex}]\label{x-degree-dfn}
Let $f(x)$ be a nonnegative function and have derivatives of all orders on $(0,\infty)$. A number $r\in\mathbb{R}\cup\{\pm\infty\}$ is said to be the completely monotonic degree of $f(x)$ with respect to $x\in(0,\infty)$ if $x^rf(x)$ is a completely monotonic function on $(0,\infty)$ but $x^{r+\varepsilon}f(x)$ is not for any positive number $\varepsilon>0$.
\end{dfn}

In what follows, for convenience, we denote the sets of completely monotonic functions on $I\subseteq\mathbb{R}$, logarithmically completely monotonic functions on $I\subseteq\mathbb{R}$, Stieltjes functions, and Bernstein functions on $I\subseteq\mathbb{R}$ by $\mathcal{C}[I]$, $\mathcal{L}[I]$, $\mathcal{S}$, and $\mathcal{B}[I]$ respectively.

\subsection{Some relationships and a characterization}
Now we briefly describe some basic relationships between the above defined classes of functions and list a characterization of Bernstein functions on $(0,\infty)$.
\par
In~\cite{CBerg, absolute-mon-simp.tex, compmon2, minus-one}, any logarithmically completely monotonic function on an interval $I$ was once again proved to be completely monotonic on $I$. In~\cite{CBerg}, the set of all Stieltjes functions was proved to be a subset of all logarithmically completely monotonic functions on $(0,\infty)$. See also~\cite[Remark~4.8]{Open-TJM-2003-Banach.tex}. Conclusively,
\begin{equation}\label{S-L-C-relation}
\mathcal{S}\subset\mathcal{L}[(0,\infty)]\subset\mathcal{C}[(0,\infty)].
\end{equation}
\par
It is obvious that any nonnegative completely monotonic function of first order is a Bernstein function.
\par
The relation between Bernstein functions and logarithmically completely monotonic functions was discovered in~\cite[pp.~161\nobreakdash--162, Theorem~3]{Chen-Qi-Srivastava-09.tex} and~\cite[p.~45, Proposition~5.17]{Schilling-Song-Vondracek-2010}, which reads that the reciprocal of any positive Bernstein function is logarithmically completely monotonic. In other words,
\begin{equation}
0<f\in\mathcal{B}[I]\Longrightarrow \frac1f\in\mathcal{L}[I].
\end{equation}
\par
A relation between $\mathcal{S}$ and $\mathcal{B}[(0,\infty)]$ was given by~\cite[Theorem~5.4]{Berg-SPBS-08} which may be recited as
\begin{equation}\label{SB-relation}
0<f\in\mathcal{S}\Longrightarrow \frac1f\in\mathcal{B}[(0,\infty)].
\end{equation}
\par
It is easy to see that the degree of any completely monotonic function on $(0,\infty)$ is at least zero. Conversely, if a nonnegative function $f(x)$ on $(0,\infty)$ has a nonnegative degree $r$, then it must be a completely monotonic function on $(0,\infty)$. See~\cite[p.~9890]{psi-proper-fraction-degree-two.tex}.
\par
Bernstein functions can be characterized by~\cite[p.~15, Theorem~3.2]{Schilling-Song-Vondracek-2010} which states that a function $f:(0,\infty)\to\mathbb{R}$ is a Bernstein function if and only if it admits the representation
\begin{equation}\label{Bernstein-Characterize}
f(x)=a+bx+\int_0^\infty\bigl(1-e^{-xt}\bigr)\td\mu(t),
\end{equation}
where $a,b\ge0$ and $\mu$ is a measure on $(0,\infty)$ satisfying
$
\int_0^\infty\min\{1,t\}\td\mu(t)<\infty.
$
\par
For information on characterizations of the classes $\mathcal{C}[(0,\infty)]$ and $\mathcal{L}[(0,\infty)]$, please refer to related texts in~\cite{CBerg, Schilling-Song-Vondracek-2010, widder} and references cited therein.

\subsection{Some means}

We recall from~~\cite{stolarsky-mean} that the extended mean value $E(r,s;x,y)$ may be defined by
\begin{align}
E(r,s;x,y)&=\biggl[\frac{r(y^s-x^s)} {s(y^r-x^r)}\biggr]^{{1/(s-r)}}, & rs(r-s)(x-y)&\ne 0; \\
E(r,0;x,y)&=\biggl[\frac{y^r-x^r}{r(\ln y-\ln x)}\biggr]^{{1/r}}, & r(x-y)&\ne 0; \\
E(r,r;x,y)&=\frac1{e^{1/r}}\biggl(\frac{x^{x^r}}{y^{y^r}}\biggr)^{ {1/(x^r-y^r)}},& r(x-y)&\ne 0; \\
E(0,0;x,y)&=\sqrt{xy}\,, & x&\ne y; \\
E(r,s;x,x)&=x, & x&=y;\notag
\end{align}
where $x,y$ are positive numbers and $r,s\in\mathbb{R}$. Because this mean was first defined in~\cite{stolarsky-mean}, so it is also called Stolarsky's mean by a number of mathematicians. Many special means with two positive variables are special cases of $E$, for example,
\begin{align*}
E(r,2r;x,y)&=M_r(x,y), &&(\text{power mean}) \\
E(1,p;x,y)&=L_p(x,y), &&(\text{generalized logarithmic mean}) \\
E(1,1;x,y)&=I(x,y), &&(\text{exponential mean}) \\
E(1,2;x,y)&=A(x,y), &&(\text{arithmetic mean}) \\
E(0,0;x,y)&=G(x,y), &&(\text{geometric mean}) \\
E(-2,-1;x,y)&=H(x,y), &&(\text{harmonic mean}) \\
E(0,1;x,y)&=L(x,y). &&(\text{logarithmic mean})
\end{align*}
For more information on $E$, please refer to the monograph~\cite{bullenmean}, the papers~\cite{emv-log-convex-simple.tex, Guo-Qi-Filomat-2011-May-12.tex, cubo, mon-element-exp-gen.tex}, and a lot of closely-related references therein.

\subsection{The arithmetic mean is a Bernstein function}

It is easy to see that the arithmetic mean
\begin{equation*}
A_{x,y}(t)=A(x+t,y+t)=A(x,y)+t
\end{equation*}
is a trivial Bernstein function of $t\in(-\min\{x,y\},\infty)$ for $x,y>0$.

\subsection{The exponential mean is a Bernstein function}

In~\cite[p.~116, Remark~6]{new-upper-kershaw-JCAM.tex}, it was pointed out that,
\begin{enumerate}
\item
by standard arguments, it is easy to verify that the reciprocal of the exponential mean
\begin{equation}
I_{x,y}(t)=I(x+t,y+t)=\frac1e\biggl[\frac{(x+t)^{x+t}}{(y+t)^{y+t}}\biggr]^{ {1/(x-y)}}
\end{equation}
for $x,y>0$ with $x\ne y$ is a logarithmically completely monotonic function of $t\in(-\min\{x,y\},\infty)$;
\item
from the newly-discovered integral representation
\begin{equation}
I(x,y)=\exp\biggl(\frac1{y-x}\int_x^y\ln u\td u\biggr),
\end{equation}
it is easy to obtain that the exponential mean $I_{x,y}(t)$ for $t>-\min\{x,y\}$ with $x\ne y$ is also a completely monotonic function of first order (that is, a Bernstein function).
\end{enumerate}

\subsection{The logarithmic mean is a Bernstein function}

In~\cite[p.~616, Remark~3.7]{gamma-psi-batir.tex}, the logarithmic mean
\begin{equation}
L_{x,y}(t)=L(x+t,y+t)
\end{equation}
was proved to be increasing and concave in $t>-\min\{x,y\}$ for $x,y>0$ with $x\ne y$.
\par
More strongly, the logarithmic mean $L_{x,y}(t)$ was proved in~\cite[Theorem~1]{log-mean-comp-mon.tex-mia} to be a completely monotonic function of first order on $(-\min\{x,y\},\infty)$ for $x,y>0$ with $x\ne y$. Therefore, the logarithmic mean $L_{x,y}(t)$ is a Bernstein function of $t\in(-\min\{x,y\},\infty)$.

\begin{rem}
By~\cite[pp.~161\nobreakdash--162, Theorem~3]{Chen-Qi-Srivastava-09.tex} or~\cite[p.~45, Proposition~5.17]{Schilling-Song-Vondracek-2010}, the logarithmically complete monotonicity of the exponential mean $I_{x,y}(t)$ and the logarithmic mean $L_{x,y}(t)$ can be deduced respectively from their common property that they are Bernstein functions.
\end{rem}

\subsection{Main results}

The goals of this paper are to prove that the harmonic mean
\begin{equation}\label{harmnic=x-y+t-eq}
H_{x,y}(t)=H(x+t,y+t)=\frac2{\frac1{x+t}+\frac1{y+t}}
\end{equation}
and the geometric mean
\begin{equation}\label{G(x=y)(t)}
G_{x,y}(t)=G(x+t,y+t)=\sqrt{(x+t)(y+t)}\,
\end{equation}
are all Bernstein functions of $t$ on $(-\min\{x,y\},\infty)$ for $x,y>0$ with $x\ne y$, and to establish integral representations of $H_{x,y}(t)$ and $G_{x,y}(t)$.

\section{Lemmas}

In order to prove our main results, the following lemmas are needed.

\begin{lem}\label{Geom-Mean-CM-lem-h(t)}
For $i\in\mathbb{N}$, the $i$-th derivatives of the functions
\begin{equation}
h(t)=\sqrt{1+\frac1t}\,,
\end{equation}
the reciprocal $\frac1{h(t)}$, and
\begin{equation}
H(t)=h(t)+\frac1{h(t)}
\end{equation}
on $(0,\infty)$ may be computed by
\begin{align}\label{h(t)-i-D}
h^{(i)}(t)&=\frac{(-1)^i}{2^it^{i+1}(1+t)^{i-1}h(t)}\sum_{k=0}^{i-1}a_{i,k}t^k,\\
\biggl[\frac1{h(t)}\biggr]^{(i)}&=\frac{(-1)^{i+1}}{2^it^i(1+t)^ih(t)}\sum_{k=0}^{i-1}b_{i,k}t^k, \label{h(t)-recip-i-D}\\
H^{(i)}(t)&=\frac{(-1)^i}{2^it^{i+1}(1+t)^{i}h(t)} \sum_{k=0}^{i-1}c_{i,k}t^k, \label{H(t)-expression}
\end{align}
where
\begin{align}\label{a=(i-k)-eq}
a_{i,k}&=\frac{(i-1)!i!(2i-2k-1)!!}{(i-k-1)!(i-k)!k!}2^{k},\\
b_{i,k}&=\frac{(i-1)!i!(2i-2k-3)!!}{(i-k-1)!(i-k)!k!}2^k, \label{b=(i-k)-eq}\\
c_{i,k}&=\frac{(i-1)!(i+1)!(2i-2k-1)!!}{(i-k-1)!(i-k+1)!k!}2^{k}.\label{c=(i-k)-eq}
\end{align}
Consequently, the functions $h(t)$ and $H(t)$ are completely monotonic on $(0,\infty)$, and the reciprocal $\frac1{h(t)}$ is a Bernstein function on $(0,\infty)$.
\end{lem}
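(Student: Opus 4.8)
The plan is to establish the three derivative formulas \eqref{h(t)-i-D}, \eqref{h(t)-recip-i-D}, \eqref{H(t)-expression} by induction on $i$, since they already package the answer; the actual content is verifying the explicit coefficients \eqref{a=(i-k)-eq}--\eqref{c=(i-k)-eq}. First I would put everything into closed form. Because $h(t)=\sqrt{(1+t)/t}=t^{-1/2}(1+t)^{1/2}$, one has $\frac1{h(t)}=t^{1/2}(1+t)^{-1/2}$, hence $\frac1{t^{i+1}(1+t)^{i-1}h(t)}=t^{-(i+1/2)}(1+t)^{-(i-1/2)}$, $\frac1{t^{i}(1+t)^{i}h(t)}=t^{-(i-1/2)}(1+t)^{-(i+1/2)}$, and $\frac1{t^{i+1}(1+t)^{i}h(t)}=t^{-(i+1/2)}(1+t)^{-(i+1/2)}$, so \eqref{h(t)-i-D} and \eqref{h(t)-recip-i-D} are equivalent to
\[
h^{(i)}(t)=\frac{(-1)^i}{2^i}\,t^{-(i+1/2)}(1+t)^{-(i-1/2)}P_i(t),\qquad
\Bigl[\tfrac1{h(t)}\Bigr]^{(i)}=\frac{(-1)^{i+1}}{2^i}\,t^{-(i-1/2)}(1+t)^{-(i+1/2)}Q_i(t),
\]
with $P_i(t)=\sum_{k=0}^{i-1}a_{i,k}t^k$ and $Q_i(t)=\sum_{k=0}^{i-1}b_{i,k}t^k$. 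The base case $i=1$ is a one-line differentiation giving $a_{1,0}=b_{1,0}=1$ (recall $(-1)!!=1$).

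For the inductive step I would differentiate the displayed form of $h^{(i)}$: since $\frac{\td}{\td t}\bigl[t^{-(i+1/2)}(1+t)^{-(i-1/2)}\bigr]=-t^{-(i+3/2)}(1+t)^{-(i+1/2)}\bigl[(i+\tfrac12)+2it\bigr]$ and $t^{-(i+1/2)}(1+t)^{-(i-1/2)}=t(1+t)\,t^{-(i+3/2)}(1+t)^{-(i+1/2)}$, one gets $h^{(i+1)}(t)=\frac{(-1)^{i+1}}{2^{i+1}}t^{-(i+3/2)}(1+t)^{-(i+1/2)}P_{i+1}(t)$ with $P_{i+1}=(2i+1+4it)P_i-2t(1+t)P_i'$, which has the required shape, and reading off the coefficient of $t^k$ converts this into the scalar recurrence $a_{i+1,k}=(2i-2k+1)a_{i,k}+2(2i-k+1)a_{i,k-1}$, with the convention that a factorial of a negative integer in a denominator kills the term (so $a_{i,k}=0$ for $k<0$ or $k\ge i$, consistent with $\deg P_{i+1}\le i$). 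It then remains to check this against \eqref{a=(i-k)-eq}: using $(2i-2k+1)(2i-2k-1)!!=(2i-2k+1)!!$ and pulling out $(i-1)!\,i!\,(2i-2k+1)!!\,2^k/(i-k)!$, the remaining bracket is $\frac1{(i-k-1)!\,k!}+\frac{2i-k+1}{(i-k+1)!\,(k-1)!}=\frac{(i-k+1)(i-k)+(2i-k+1)k}{(i-k+1)!\,k!}=\frac{i(i+1)}{(i-k+1)!\,k!}$, which reassembles exactly into $a_{i+1,k}$. The argument for $\frac1{h(t)}$ is identical up to the shift $\frac{\td}{\td t}\bigl[t^{-(i-1/2)}(1+t)^{-(i+1/2)}\bigr]=-t^{-(i+1/2)}(1+t)^{-(i+3/2)}\bigl[(i-\tfrac12)+2it\bigr]$, which gives $Q_{i+1}=(2i-1+4it)Q_i-2t(1+t)Q_i'$, hence $b_{i+1,k}=(2i-2k-1)b_{i,k}+2(2i-k+1)b_{i,k-1}$, checked against \eqref{b=(i-k)-eq} the same way (same bracket, with $(2i-2k-1)!!$ pulled out in place of $(2i-2k+1)!!$).

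Formula \eqref{H(t)-expression} I would then deduce without a separate induction, simply by adding the two established formulas: since $H=h+\frac1h$, combining over the common denominator $t^{i+1/2}(1+t)^{i+1/2}$ gives $H^{(i)}(t)=\frac{(-1)^i}{2^i}\,\dfrac{(1+t)P_i(t)-tQ_i(t)}{t^{i+1/2}(1+t)^{i+1/2}}$, so it suffices to show $\sum_{k=0}^{i-1}c_{i,k}t^k=(1+t)P_i(t)-tQ_i(t)$, i.e. $c_{i,k}=a_{i,k}+a_{i,k-1}-b_{i,k-1}$ for $0\le k\le i-1$, together with vanishing of the $t^i$-coefficient, which is the identity $a_{i,i-1}=b_{i,i-1}=i!\,2^{i-1}$. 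Using $(2i-2k+1)!!=(2i-2k+1)(2i-2k-1)!!$ and $2i-2k=2(i-k)$ gives $a_{i,k-1}-b_{i,k-1}=\frac{(i-1)!\,i!\,(2i-2k-1)!!\,2^k}{(i-k-1)!\,(i-k+1)!\,(k-1)!}$, and adding $a_{i,k}$ and simplifying (the bracket now collapses via $(i-k+1)+k=i+1$) yields precisely \eqref{c=(i-k)-eq}. The ``consequently'' part is then immediate: every double factorial occurring has argument $\ge-1$, so all of $a_{i,k},b_{i,k},c_{i,k}$ are positive, whence the displayed formulas give $(-1)^ih^{(i)}(t)>0$, $(-1)^iH^{(i)}(t)>0$ and $(-1)^{i-1}\bigl[\tfrac1{h(t)}\bigr]^{(i)}>0$ on $(0,\infty)$ for all $i\in\mathbb N$; since $h,\frac1h,H$ are manifestly positive there, $h$ and $H$ are completely monotonic and $\frac1h$ is a Bernstein function.

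The only real work is the two coefficient identities, and the sole place care is needed is the double-factorial bookkeeping and the boundary indices $k=0,i$, each of which boils down to the elementary identities $(i-k+1)(i-k)+(2i-k+1)k=i(i+1)$ and $(i-k+1)+k=i+1$. As a sanity check on the qualitative conclusions one may also argue with almost no computation: $1+\frac1t$ is completely monotonic and $x\mapsto\sqrt x$ is a Bernstein function, so $h(t)=\sqrt{1+\frac1t}$ is completely monotonic as a Bernstein function of a completely monotonic one, while $\frac{t}{t+1}=1-\frac1{t+1}$ is a Bernstein function, so $\frac1{h(t)}=\sqrt{t/(t+1)}$ is Bernstein as a composition of two Bernstein functions; it is the inductive argument, however, that additionally produces the explicit derivative formulas \eqref{h(t)-i-D}--\eqref{H(t)-expression} needed later.
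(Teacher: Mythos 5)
Your proof is correct, and its overall strategy (induction on $i$ for the derivative formulas, then addition to get $H^{(i)}$, then positivity of the coefficients for the monotonicity claims) is the same as the paper's, but the execution differs in two places worth noting. First, after deriving the recurrence $a_{i+1,k}=(2i-2k+1)a_{i,k}+2(2i-k+1)a_{i,k-1}$ (identical to the paper's \eqref{a(i+1,k)}), you verify the closed form \eqref{a=(i-k)-eq} directly against it via the identity $(i-k+1)(i-k)+(2i-k+1)k=i(i+1)$; the paper instead computes $a_{i,i-1},\dots,a_{i,i-6}$ one by one, guesses the pattern $a_{i,i-k}=\mu_k\frac{(i-1)!}{(i-k)!}2^{i-k}i!$, and solves a recurrence for $\mu_k$ before circling back to a verification. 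Your route is shorter and avoids the ``inductively, we can derive'' leap. (Incidentally, your boundary bookkeeping also quietly repairs a typo in the paper: \eqref{a(i+1,i)} should read $a_{i+1,i}=2(i+1)a_{i,i-1}$, which is what both the expansion and your convention $a_{i,i}=0$ give.) Second, for $[1/h]^{(i)}$ you run a parallel induction with the recurrence $b_{i+1,k}=(2i-2k-1)b_{i,k}+2(2i-k+1)b_{i,k-1}$, whereas the paper derives \eqref{h(t)-recip-i-D} from the identity $\frac1{h(t)}=-2t^2h'(t)$ via the Leibniz rule and the already-proved $a$-formulas; your version is more uniform, the paper's avoids a second recurrence at the cost of a longer coefficient computation. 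The passage to $H^{(i)}$ via $c_{i,k}=a_{i,k}+a_{i,k-1}-b_{i,k-1}$ and the cancellation $a_{i,i-1}=b_{i,i-1}$ is exactly the paper's argument.

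One caveat on your closing ``sanity check'': the claim that $h(t)=\sqrt{1+1/t}$ is completely monotonic ``as a Bernstein function of a completely monotonic one'' invokes a false general principle. The true composition rule goes the other way (completely monotonic composed with Bernstein is completely monotonic); a Bernstein function of a completely monotonic function need not be completely monotonic --- e.g.\ $g(x)=1-e^{-x}$ is Bernstein and $f(t)=1/t$ is completely monotonic, yet $g(f(t))=1-e^{-1/t}$ has second derivative $e^{-1/t}t^{-4}(2t-1)$, which changes sign. The correct soft argument, used in the paper's ``short proof,'' is that $1+1/t$ is a Stieltjes function and $f\in\mathcal{S}$ implies $f^{\alpha}\in\mathcal{S}\subset\mathcal{C}[(0,\infty)]$ for $0\le\alpha\le1$. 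Your second soft argument, that $\frac1{h(t)}=\sqrt{t/(1+t)}$ is Bernstein as a composition of Bernstein functions, is fine and matches the paper. Since the sanity check is not load-bearing --- your inductive proof already yields all the monotonicity conclusions from the positivity of $a_{i,k},b_{i,k},c_{i,k}$ --- this does not affect the validity of the proof.
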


\begin{proof}[Inductive proof of Lemma~\ref{Geom-Mean-CM-lem-h(t)}]
A direct calculation yields $h'(t)=-\frac1{2t^2h(t)}$, which means that
\begin{equation}\label{a(1,0)}
a_{1,0}=1.
\end{equation}
So, the formulas~\eqref{h(t)-i-D} and~\eqref{a=(i-k)-eq} are valid for $i=1$ and $k=0$.
\par
Differentiating on both sides of~\eqref{h(t)-i-D} gives
\begin{align*}
h^{(i+1)}(t)=\bigl[h^{(i)}(t)\bigr]'
&=\Biggl[\frac{(-1)^i}{2^it^{i+1}(1+t)^{i-1}h(t)}\sum_{k=0}^{i-1}a_{i,k}t^k\Biggr]'\\
&=\frac{(-1)^{i+1}}{2^{i+1}t^{i+2}(1+t)^{i}h(t)}\sum_{k=0}^{i-1}[1+2(i-k)+2(2i-k)t]a_{i,k}t^k\\
&=\frac{(-1)^{i+1}}{2^{i+1}t^{i+2}(1+t)^{i}h(t)}\sum_{k=0}^{i}a_{i+1,k}t^k.
\end{align*}
Because
\begin{gather*}
\sum_{k=0}^{i-1}[1+2(i-k)+2(2i-k)t]a_{i,k}t^k
=\sum_{k=0}^{i-1}[1+2(i-k)]a_{i,k}t^k +\sum_{k=0}^{i-1}2(2i-k)a_{i,k}t^{k+1}\\
=\sum_{k=0}^{i-1}[1+2(i-k)]a_{i,k}t^k +\sum_{k=1}^{i}2(2i-k+1)a_{i,k-1}t^{k}\\
=(1+2i)a_{i,0}+\sum_{k=1}^{i-1}\{[1+2(i-k)]a_{i,k}+2(2i-k+1)a_{i,k-1}\}t^k +2(i+1)a_{i,i-1}t^{i},
\end{gather*}
we obtain
\begin{align}
a_{i+1,0}&=(1+2i)a_{i,0},\label{a(i+1,0)}\\
a_{i+1,i}&=2ia_{i,i-1},\label{a(i+1,i)}
\end{align}
and, for $0<k<i$,
\begin{equation}\label{a(i+1,k)}
a_{i+1,k}=[1+2(i-k)]a_{i,k}+2(2i-k+1)a_{i,k-1}.
\end{equation}
\par
Combining~\eqref{a(1,0)} with~\eqref{a(i+1,0)} and~\eqref{a(i+1,i)} results in
\begin{equation}\label{a=i-0}
a_{i,0}=(2i-1)!!
\end{equation}
and
\begin{equation}\label{a=i=i-1}
a_{i,i-1}=2^{i-1}i!.
\end{equation}
\par
Taking $k=i-1$ in~\eqref{a(i+1,k)} and using~\eqref{a=i=i-1} give
\begin{equation}\label{a=i+1=i-1}
a_{i+1,i-1}=3a_{i,i-1}+2(i+2)a_{i,i-2}=3\cdot2^{i-1}i!+2(i+2)a_{i,i-2}.
\end{equation}
From~\eqref{a=i-0}, it is easily deduced that $a_{2,0}=3$. Substituting this into~\eqref{a=i+1=i-1} and recurring repeatedly lead to
\begin{equation}\label{a=i=i-2}
a_{i,i-2}=3(i-1)2^{i-3}i!.
\end{equation}
\par
Taking $k=i-2$ in~\eqref{a(i+1,k)} and using~\eqref{a=i=i-2} show
\begin{equation}\label{a=i+1=i-2}
a_{i+1,i-2}=5a_{i,i-2}+2(i+3)a_{i,i-3}=15(i-1)2^{i-3}i!+2(i+3)a_{i,i-3}.
\end{equation}
From~\eqref{a=i-0}, it is readily deduced that $a_{3,0}=15$. Substituting this into~\eqref{a=i+1=i-2} and recurring repeatedly reveal
\begin{equation}\label{a=i=i-3}
a_{i,i-3}=5(i-2)(i-1)2^{i-5}i!.
\end{equation}
\par
Taking $k=i-3$ in~\eqref{a(i+1,k)} and using~\eqref{a=i=i-3} show
\begin{equation}\label{a=i+1=i-3}
a_{i+1,i-3}=7a_{i,i-3}+2(i+4)a_{i,i-4}=35(i-2)(i-1)2^{i-5}i!+2(i+4)a_{i,i-4}.
\end{equation}
From~\eqref{a=i-0}, it is immediately obtained that $a_{4,0}=105$. Substituting this into~\eqref{a=i+1=i-3} and recurring repeatedly yield
\begin{equation}\label{a=i=i-4}
a_{i,i-4}=\frac{35}{3} (i-3) (i-2) (i-1) 2^{i-8} i!.
\end{equation}
\par
By the same arguments as above, we may obtain
\begin{equation}\label{a=i=i-5}
a_{i,i-5}=21(i-4)(i-3)(i-2)(i-1)2^{i-11}i!
\end{equation}
and
\begin{equation}\label{a=i=i-6}
a_{i,i-6}=\frac{77}5(i-5)(i-4)(i-3)(i-2)(i-1)2^{i-13}i!.
\end{equation}
\par
Inductively, we can derive that
\begin{equation}\label{a=i=i-k}
a_{i,i-k}=\lambda_{i,i-k}\frac{(i-1)!}{(i-k)!}2^{i-k}i!
\end{equation}
for $0<k<i$. Specially, we have
\begin{equation}
\begin{aligned}\label{b=i-k=eq}
\lambda_{i,i-1}&=1, & \lambda_{i,i-2}&=\frac32, & \lambda_{i,i-3}&=\frac54, \\
\lambda_{i,i-4}&=\frac{35}{3\cdot2^4}, & \lambda_{i,i-5}&=\frac{21}{2^6}, & \lambda_{i,i-6}&=\frac{77}{5\cdot2^7}.
\end{aligned}
\end{equation}
\par
Replacing $k$ by $i-\ell$ in~\eqref{a=i=i-k} yields
\begin{equation}\label{a=i=ell}
a_{i,\ell}=\lambda_{i,\ell}\frac{(i-1)!}{\ell!}2^\ell i!
\end{equation}
for $0<\ell<i$. Substituting~\eqref{a=i=ell} into~\eqref{a(i+1,k)} leads to
\begin{equation}\label{b(i=ell)}
[1+2(i-\ell)]\lambda_{i,\ell}+\ell(2i-\ell+1)\lambda_{i,\ell-1}=i(i+1)\lambda_{i+1,\ell}
\end{equation}
for $0<\ell<i$. The equality~\eqref{b(i=ell)} is equivalent to
\begin{equation}\label{b=(i-k)-recur}
(1+2k)\lambda_{i,i-k}+(i-k)(i+k+1)\lambda_{i,i-k-1}=i(i+1)\lambda_{i+1,i-k}
\end{equation}
for $0<k<i$.
\par
The quantities in~\eqref{b=i-k=eq} implies that $\lambda_{i,i-k}=\mu_k$, that is, $\lambda_{i,i-k}$ is independent of $i$. Then the equality~\eqref{b=(i-k)-recur} may be written as
\begin{equation}\label{b=(i-k)-recur-without=i}
(1+2k)\mu_k=[i(i+1)-(i-k)(i+k+1)]\mu_{k+1}=k(1+k)\mu_{k+1}
\end{equation}
for $0<k<i$. Recurring~\eqref{b=(i-k)-recur-without=i} by $\mu_1=\lambda_{i,i-1}=1$ reveals
\begin{equation}\label{lambda-express}
\mu_k=\lambda_{i,i-k}=\frac{(2k-1)!!}{(k-1)!k!}
\end{equation}
for $0<k<i$. As a result, by~\eqref{lambda-express}, we conclude that
\begin{equation}\label{a=i=i-k-lambda}
a_{i,i-k}=\frac{(2k-1)!!}{(k-1)!k!}\frac{(i-1)!}{(i-k)!}2^{i-k}i!
\end{equation}
for $0<k<i$. Replacing $k$ by $i-\ell$ in~\eqref{a=i=i-k-lambda} shows
\begin{equation}\label{a=i=i-k-ell}
a_{i,\ell}=\frac{(2i-2\ell-1)!!}{(i-\ell-1)!(i-\ell)!}\frac{(i-1)!}{\ell!}2^{\ell}i!
\end{equation}
for $0<\ell<i$. It is easy to verify that the sequence~\eqref{a=i=i-k-ell} for $0\le\ell\le i-1$ meets the recursion formulas~\eqref{a(i+1,0)}, \eqref{a(i+1,i)}, and~\eqref{a(i+1,k)}. The formulas~\eqref{h(t)-i-D} and~\eqref{a=(i-k)-eq} for general terms are thus proved.
\par
It is obvious that
$
h'(t)=-\frac1{2t^2h(t)}
$
which is equivalent to
$
\frac1{h(t)}=-2t^2h'(t).
$
Therefore, using the formulas~\eqref{h(t)-i-D} and~\eqref{a=(i-k)-eq} just verified, we have
\begin{align*}
\biggl[\frac1{h(t)}\biggr]^{(i)}&=-2\bigl[t^2h'(t)\bigr]^{(i)}\\
&=-2\sum_{\ell=0}^i\binom{i}{\ell}\bigl(t^2\bigr)^{(\ell)}h^{(i-\ell+1)}(t)\\
&=-2\biggl[\binom{i}{0}t^2h^{(i+1)}(t)+2\binom{i}{1}th^{(i)}(t)+2\binom{i}{2}h^{(i-1)}(t)\biggr]\\
&=-2\Biggl[\frac{(-1)^{i+1}}{2^{i+1}t^{i}(1+t)^{i}h(t)}\sum_{k=0}^{i}a_{i+1,k}t^k +\frac{(-1)^ii}{2^{i-1}t^{i}(1+t)^{i-1}h(t)}\sum_{k=0}^{i-1}a_{i,k}t^k \\
&\quad+\frac{(-1)^{i-1}(i-1)i}{2^{i-1}t^{i}(1+t)^{i-2}h(t)}\sum_{k=0}^{i-2}a_{i-1,k}t^k\Biggr]\\
&=\frac{(-1)^{i+1}}{2^it^i(1+t)^ih(t)}\Biggl[4i(1+t) \sum_{k=0}^{i-1}a_{i,k}t^k-\sum_{k=0}^{i}a_{i+1,k}t^k\\
&\quad-4(i-1)i(1+t)^2\sum_{k=0}^{i-2}a_{i-1,k}t^k\Biggr]\\
&=\frac{(-1)^{i+1}}{2^it^i(1+t)^ih(t)}\Biggl\{4ia_{i,0}-4i(i-1)a_{i-1,0}-a_{i+1,0}\\
&\quad+[4i(a_{i,1}+a_{i,0})-4i(i-1)(a_{i-1,1}+2a_{i-1,0})-a_{i+1,1}]t \\
&\quad+[4i(a_{i,i-1}+a_{i,i-2})-4i(i-1)(a_{i-1,i-3}+2a_{i-1,i-2})-a_{i+1,i-1}]t^{i-1}\\
&\quad+[4ia_{i,i-1}-4i(i-1)a_{i-1,i-2}-a_{i+1,i}]t^i +\sum_{k=2}^{i-2}[4i(a_{i,k}+a_{i,k-1}) \\
&\quad-4i(i-1)(a_{i-1,k}+2a_{i-1,k-1}+a_{i-1,k-2})-a_{i+1,k}]t^k\Biggr\}\\
&=\frac{(-1)^{i+1}}{2^it^i(1+t)^ih(t)}\sum_{k=0}^{i-1}\frac{(i-1)!i!(2i-2k-3)!!}{(i-k-1)!(i-k)!k!}2^kt^k.
\end{align*}
Hence, the general formulas~\eqref{h(t)-recip-i-D} and~\eqref{b=(i-k)-eq} are obtained.
\par
Adding the two formulas~\eqref{h(t)-i-D} and~\eqref{h(t)-recip-i-D} yields
\begin{gather*}
h^{(i)}(t)+\biggl[\frac1{h(t)}\biggr]^{(i)}=\frac{(-1)^i}{2^it^{i+1}(1+t)^{i}h(t)} \Biggl[(1+t)\sum_{k=0}^{i-1}a_{i,k}t^k -t\sum_{k=0}^{i-1}b_{i,k}t^k\Biggr]\\
\begin{aligned}
&=\frac{(-1)^i}{2^it^{i+1}(1+t)^{i}h(t)} \Biggl[\sum_{k=0}^{i-1}(a_{i,k}-b_{i,k})t^{k+1}+\sum_{k=0}^{i-1}a_{i,k}t^{k}\Biggr]\\
&=\frac{(-1)^i}{2^it^{i+1}(1+t)^{i}h(t)} \Biggl[\sum_{k=1}^{i}(a_{i,k-1}-b_{i,k-1})t^k+\sum_{k=0}^{i-1}a_{i,k}t^{k}\Biggr]
\end{aligned}\\
\begin{aligned}
&=\frac{(-1)^i}{2^it^{i+1}(1+t)^{i}h(t)} \Biggl[a_{i,0}+\sum_{k=1}^{i-1}(a_{i,k-1}-b_{i,k-1}+a_{i,k})t^k+(a_{i,i-1}-b_{i,i-1})t^i\Biggr]\\
&=\frac{(-1)^i}{2^it^{i+1}(1+t)^{i}h(t)} \Biggl\{(2i-1)!! +\sum_{k=1}^{i-1}\frac{(i-1)!(i+1)!(2i-2k-1)!!}{(i-k-1)!(i-k+1)!k!}2^{k}t^k\Biggr\}\\
&=\frac{(-1)^i}{2^it^{i+1}(1+t)^{i}h(t)} \sum_{k=0}^{i-1}\frac{(i-1)!(i+1)!(2i-2k-1)!!}{(i-k-1)!(i-k+1)!k!}2^{k}t^k.
\end{aligned}
\end{gather*}
This implies that the function $H(t)$ is completely monotonic on $(0,\infty)$. The proof of Lemma~\ref{Geom-Mean-CM-lem-h(t)} is completed.
\end{proof}

\begin{proof}[Short proofs of a part of Lemma~\ref{Geom-Mean-CM-lem-h(t)}]
In~\cite[p.~13, Remark~2.4]{Schilling-Song-Vondracek-2010}, it was collected as an example that the function $\frac1{a+t}$ is a Stieltjes function for $a>0$. The property~(iv) in Section~3 of~\cite{Berg-SPBS-08} (See also the property~(vii) in~\cite[Theorem~1.3]{Kalugin-Jeffrey-Corless-Borwein}) reads that if $f\in\mathcal{S}$ then $f^\alpha\in\mathcal{S}$ for $0\le \alpha\le1$. Specially for $a=1$ and $\alpha=\frac12$, we have $h_1(t)=\frac1{\sqrt{1+t}\,}\in\mathcal{S}$. The property~(i) in Section~3 of~\cite{Berg-SPBS-08} (See also the property~(i) in~\cite[Theorem~1.3]{Kalugin-Jeffrey-Corless-Borwein}) states that if $f\in\mathcal{S}\setminus\{0\}$ then $\frac1{f(1/x)}\in\mathcal{S}$. Applying this property to $h_1(t)$ brings out
\begin{equation}\label{h(t)inS-eq}
h(t)=\frac1{h_1(1/t)}\in\mathcal{S}
\end{equation}
which means, by the relation from the very ends of the inclusions~\eqref{S-L-C-relation}, that $h(t)\in\mathcal{C}[(0,\infty)]$ and, by the relation~\eqref{SB-relation}, that $\frac1{h(t)}\in\mathcal{B}[(0,\infty)]$.
\par
In~\cite[p.~24, Remark~3.11]{Schilling-Song-Vondracek-2010}, it was listed as examples that $h_2(t)=t^\beta\in\mathcal{B}[(0,\infty)]$ for $0<\beta<1$ and $h_3(t)=\frac{t}{1+t}\in\mathcal{B}[(0,\infty)]$. The item~(iii) of Corollary~3.7 in~\cite[p.~20]{Schilling-Song-Vondracek-2010} write that if $f_1,f_2\in\mathcal{B}[(0,\infty)]$ then $f_1\circ f_2\in\mathcal{B}[(0,\infty)]$. Applying $f_1$ and $f_2$ respectively to $h_2$ and $h_3$ reveals once again that $\frac1{h(t)}=\sqrt{\frac{t}{1+t}}\,\in\mathcal{B}[(0,\infty)]$.
\par
Taking $h_3(x)=x+\frac1x$ and $h_4(t)=\frac1{h(t)}=\frac1{\sqrt{1+1/t}\,}$. It is easy to see that $h_3\in\mathcal{C}[(0,1)]$ and $0<h_4(t)<1$. A part of Theorem~3.6 in~\cite[p.~19]{Schilling-Song-Vondracek-2010} asserts that if $0<f\in\mathcal{B}[(0,\infty)]$ then $g\circ f\in\mathcal{C}[(0,\infty)]$ for every $g\in\mathcal{C}[(0,\infty)]$. Since $h_4\in\mathcal{B}[(0,\infty)]$, applying $f$ and $g$ in this assertion respectively to $h_4$ and $h_3$ leads to $H(t)=h(t)+\frac1{h(t)}\in\mathcal{C}[(0,\infty)]$. The proof of Lemma~\ref{Geom-Mean-CM-lem-h(t)} is completed.
\end{proof}

\begin{lem}\label{lem2-zhang-li-qi-eq}
For $z\in\mathbb{C}\setminus(-\infty,0]$, the complex functions $h(z)$ and $\frac1{h(z)}$ have integral representations
\begin{equation}\label{thm-zhang-li-qi-eq}
h(z)=1+\frac1\pi\int_0^1\sqrt{\frac1u-1}\,\frac{\td u}{u+z}
\end{equation}
and
\begin{equation}\label{thm-zhang-li-qi-eq-frac}
\frac1{h(z)}=1-\frac1\pi\int_0^1\frac1{\sqrt{\frac1u-1}\,}\frac{\td u}{u+z}.
\end{equation}
Consequently, the functions $h(t)$ and $1-\frac1{h(t)}$ are Stieltjes functions and the complex function $H(z)$ has the integral integral representation
\begin{equation}\label{H(z)=int-expression}
H(z)=2+\frac1\pi\int_0^\infty\rho(s)e^{-zs}\td s
\end{equation}
for $z\in\mathbb{C}\setminus(-\infty,0]$, where
\begin{equation}\label{rho-funct-dfn}
\rho(s)=\int_0^{1/2} q(u)\bigl[1-e^{-(1-2u)s}\bigr]e^{-us}\td u
=\int_0^{1/2} q\biggl(\frac12-u\biggr)\bigl(e^{us}-e^{-us}\bigr)e^{-s/2}\td u
\end{equation}
is nonnegative on $(0,\infty)$ and
\begin{equation}
q(u)=\sqrt{\frac1u-1}\,-\frac1{\sqrt{1/u-1}\,}
\end{equation}
on $(0,1)$.
\end{lem}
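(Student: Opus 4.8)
The plan is to establish the two scalar integral representations \eqref{thm-zhang-li-qi-eq} and \eqref{thm-zhang-li-qi-eq-frac} first, and then to read off the Stieltjes claims and to derive \eqref{H(z)=int-expression}--\eqref{rho-funct-dfn} from them by elementary manipulations.

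For \eqref{thm-zhang-li-qi-eq} and \eqref{thm-zhang-li-qi-eq-frac} I would use the Cauchy integral formula (equivalently, the Stieltjes--Perron inversion formula). The function $h(z)-1=\sqrt{1+1/z}\,-1$ is holomorphic on $\mathbb{C}\setminus[-1,0]$, behaves like $\frac1{2z}$ as $z\to\infty$, and is integrable near the endpoints $0$ and $-1$ of the slit; hence, applying Cauchy's formula on a dog-bone contour around $[-1,0]$ together with a large circle whose contribution vanishes in the limit, one gets
\begin{equation*}
h(z)-1=\frac1{2\pi i}\int_{-1}^0\frac{h(\tau+i0)-h(\tau-i0)}{\tau-z}\td\tau .
\end{equation*}
The one point that needs care --- and I expect this to be the only genuinely delicate step --- is the determination of the boundary values of the principal branch across the slit: for $0<u<1$ one finds $h(-u\pm i0)=\mp i\sqrt{1/u-1}\,$, so that $h(-u+i0)-h(-u-i0)=-2i\sqrt{1/u-1}\,$, and substituting $\tau=-u$ then gives \eqref{thm-zhang-li-qi-eq}. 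The same argument applied to $1-\frac1{h(z)}=1-\sqrt{z/(z+1)}\,$, which is likewise holomorphic on $\mathbb{C}\setminus[-1,0]$, vanishes at $\infty$, and has boundary values $\frac1{h(-u\pm i0)}=\pm\frac{i}{\sqrt{1/u-1}\,}$, yields \eqref{thm-zhang-li-qi-eq-frac}. As an independent check, the substitution $u=\sin^2\theta$ reduces the integral in \eqref{thm-zhang-li-qi-eq} to $\int_0^\pi\frac{1+\cos\phi}{1+2z-\cos\phi}\td\phi$, which evaluates by the classical formula $\int_0^\pi\frac{\td\phi}{a-\cos\phi}=\frac\pi{\sqrt{a^2-1}\,}$ (with $a=1+2z$, so $\sqrt{a^2-1}=2\sqrt{z(z+1)}\,$) to exactly $\pi\bigl(\sqrt{1+1/z}\,-1\bigr)$.

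With \eqref{thm-zhang-li-qi-eq} and \eqref{thm-zhang-li-qi-eq-frac} in hand, the Stieltjes assertions follow at once from Definition~\ref{Stieltjes-dfn}: for $h(t)$ take $a=0$, $b=1$, $\td\mu(u)=\frac1\pi\sqrt{1/u-1}\,\td u$ on $(0,1)$; for $1-\frac1{h(t)}$ take $a=b=0$, $\td\mu(u)=\frac1\pi\frac{\td u}{\sqrt{1/u-1}\,}$ on $(0,1)$. In both cases the density is $O(u^{-1/2})$ near $u=0$ and bounded or $O\bigl((1-u)^{-1/2}\bigr)$ near $u=1$, so the normalization $\int_0^\infty\frac1{1+s}\td\mu(s)<\infty$ holds. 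Adding \eqref{thm-zhang-li-qi-eq} and \eqref{thm-zhang-li-qi-eq-frac} gives $H(z)=2+\frac1\pi\int_0^1 q(u)\frac{\td u}{u+z}$ with $q(u)=\sqrt{1/u-1}\,-\frac1{\sqrt{1/u-1}\,}=\frac{1-2u}{\sqrt{u(1-u)}\,}$.

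Finally, to reach \eqref{H(z)=int-expression}--\eqref{rho-funct-dfn} I would insert $\frac1{u+z}=\int_0^\infty e^{-(u+z)s}\td s$ (valid for $\operatorname{Re}z>0$), interchange the two integrations by absolute convergence, and thereby identify $\rho(s)=\int_0^1 q(u)e^{-us}\td u$. The key elementary fact is the antisymmetry $q(1-u)=-q(u)$, immediate from $\frac1{1-u}-1=\frac u{1-u}=\bigl(\frac1u-1\bigr)^{-1}$: the substitution $u\mapsto1-u$ on $\bigl(\frac12,1\bigr)$ folds the integral to $\rho(s)=\int_0^{1/2}q(u)\bigl[e^{-us}-e^{-(1-u)s}\bigr]\td u$, which, after writing $e^{-us}-e^{-(1-u)s}=e^{-us}\bigl(1-e^{-(1-2u)s}\bigr)$, is the first form in \eqref{rho-funct-dfn}; the further substitution $u\mapsto\frac12-u$ produces the second. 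Nonnegativity of $\rho$ on $(0,\infty)$ is then transparent: for $u\in\bigl(0,\frac12\bigr)$ one has $q(u)>0$ since $\frac1u-1>1$ there, and $1-e^{-(1-2u)s}>0$ for $s>0$, so the integrand is nonnegative. This completes the proof; the only nontrivial ingredient is the branch-value computation in the second paragraph, everything else being bookkeeping and a symmetry.
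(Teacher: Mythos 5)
Your argument is correct and follows essentially the same route as the paper's first proof: the Cauchy integral formula applied around the branch cut, with the key step being the boundary values $h(-u\pm i0)=\mp i\sqrt{1/u-1}\,$ of the principal branch (which match the paper's computation of $\lim_{\varepsilon\to0^+}\Im h(-t+i\varepsilon)$), followed by the same folding of $\int_0^1q(u)e^{-us}\,\td u$ via $q(1-u)=-q(u)$ to get $\rho\ge0$. Your independent verification of \eqref{thm-zhang-li-qi-eq} through the substitution $u=\sin^2\theta$ and the classical integral $\int_0^\pi\frac{\td\phi}{a-\cos\phi}=\frac{\pi}{\sqrt{a^2-1}\,}$ is a worthwhile addition not present in the paper, and your restriction of the Laplace-transform interchange to $\operatorname{Re}z>0$ is in fact more careful than the paper's own statement.
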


\begin{proof}[Proof by Cauchy integral formula]
By standard arguments, we immediately obtain that
\begin{gather}\label{zh(z)=0}
\lim_{z\to0}[zh(z)]=\lim_{z\to0}\sqrt{z^2+z}\,=\sqrt{\lim_{z\to0}(z^2+z)}\,=0,\\
\lim_{z\to0}\frac{z}{h(z)}=\lim_{z\to0}\sqrt{\frac{z^3}{1+z}}\, =\sqrt{\lim_{z\to0}\frac{z^3}{1+z}}\,=0, \label{frac-z=h(z)=0}\\
\lim_{z\to\infty}\sqrt{1+\frac1z}\,=\sqrt{1+\lim_{z\to\infty}\frac1z}\,=1, \label{sqrt-1=frac-z}\\
\lim_{z\to\infty}\frac1{\sqrt{1+\frac1z}}\,=\frac1{\sqrt{1+\lim_{z\to\infty}\frac1z}}\,=1, \label{sqrt-1=frac-z-recip}\\
h(\overline{z})=\overline{h(z)}, \label{h(z)-h(z)-conjugate}\\
\frac1{h(\overline{z})}=\overline{\biggl[\frac1{h(z)}\biggr]}. \label{h(z)-frac-conjugate}
\end{gather}
\par
For $t\in(0,\infty)$ and $\varepsilon>0$, we have
\begin{gather*}
h(-t+i\varepsilon)=\sqrt{1+\frac1{-t+i\varepsilon}}\,
=\sqrt{1+\frac{-t-i\varepsilon}{t^2+\varepsilon^2}}\,
=\exp\biggl[\frac12\ln\biggl(1+\frac{-t-i\varepsilon}{t^2+\varepsilon^2}\biggr)\biggr]\\
=\exp\biggl\{\frac12\biggl[\ln\biggl|\frac{t^2+\varepsilon^2-t}{t^2+\varepsilon^2} -i\frac{\varepsilon}{t^2+\varepsilon^2}\biggr| +i\arg\biggl(\frac{t^2+\varepsilon^2-t}{t^2+\varepsilon^2}-i\frac{\varepsilon} {t^2+\varepsilon^2}\biggr)\biggr]\biggr\}\\
=\exp\biggl\{\frac12\biggl[\ln p(t,\varepsilon) +i\arg\biggl(\frac{t^2+\varepsilon^2-t}{t^2+\varepsilon^2} -i\frac{\varepsilon}{t^2+\varepsilon^2}\biggr)\biggr]\biggr\}\\
=
\begin{cases}
\exp\biggl\{\dfrac12\biggl[\ln p(t,\varepsilon)\, +i\arctan\dfrac{\varepsilon}{t^2+\varepsilon^2}\biggr]\biggr\}, &t^2+\varepsilon^2-t>0,\\
\exp\biggl\{\dfrac12\biggl[\ln p(t,\varepsilon)\, +i\biggl(\arctan\dfrac{\varepsilon}{t^2+\varepsilon^2}-\pi\biggr)\biggr]\biggr\}, &t^2+\varepsilon^2-t<0,\\
\exp\biggl\{\dfrac12\biggl(\ln\dfrac{\varepsilon}{t^2+\varepsilon^2}\, -i\dfrac\pi2\biggr)\biggr\}, &t^2+\varepsilon^2-t=0,
\end{cases}
\end{gather*}
where
\begin{equation*}
p(t,\varepsilon)=\sqrt{\biggl(\dfrac{t^2+\varepsilon^2-t}{t^2+\varepsilon^2}\biggr)^2 +\biggl(\dfrac{\varepsilon}{t^2+\varepsilon^2}\biggr)^2}\,.
\end{equation*}
Hence,
\begin{equation*}
\Im h(-t+i\varepsilon)=\begin{cases}
\exp\biggl[\dfrac12\ln p(t,\varepsilon)\,\biggr] \sin\biggl(\dfrac12\arctan\dfrac{\varepsilon}{t^2+\varepsilon^2}\biggr), &t^2+\varepsilon^2-t>0;\\
\exp\biggl[\dfrac12\ln p(t,\varepsilon)\,\biggr] \sin\biggl(\dfrac12\arctan\dfrac{\varepsilon}{t^2+\varepsilon^2}-\dfrac\pi2\biggr), &t^2+\varepsilon^2-t<0;\\
-\exp\biggl(\dfrac12\ln\dfrac{\varepsilon}{t^2+\varepsilon^2}\,\biggr)\sin\dfrac\pi4, &t^2+\varepsilon^2-t=0.
\end{cases}
\end{equation*}
Accordingly,
\begin{equation}\label{im-limit}
\lim_{\varepsilon\to0^+}\Im h(-t+i\varepsilon)=
\begin{cases}
-\sqrt{\dfrac1t-1}, & 0<t<1;\\
\infty, & t=1;\\
0, & t>1.
\end{cases}
\end{equation}
Similarly, for $t\in(0,\infty)$ and $\varepsilon>0$, we have
\begin{align*}
\frac1{h(-t+i\varepsilon)}&=\exp\biggl[-\frac12\ln\biggl(1+\frac{-t-i\varepsilon} {t^2+\varepsilon^2}\biggr)\biggr]\\
&=
\begin{cases}
\exp\biggl\{-\dfrac12\biggl[\ln p(t,\varepsilon)\, +i\arctan\dfrac{\varepsilon}{t^2+\varepsilon^2}\biggr]\biggr\}, &t^2+\varepsilon^2-t>0;\\
\exp\biggl\{-\dfrac12\biggl[\ln p(t,\varepsilon)\, +i\biggl(\arctan\dfrac{\varepsilon}{t^2+\varepsilon^2}-\pi\biggr)\biggr]\biggr\}, &t^2+\varepsilon^2-t<0;\\
\exp\biggl\{-\dfrac12\biggl(\ln\dfrac{\varepsilon}{t^2+\varepsilon^2}\, -i\dfrac\pi2\biggr)\biggr\}, &t^2+\varepsilon^2-t=0.
\end{cases}
\end{align*}
Therefore,
\begin{multline*}
\Im\biggl[\frac1{h(-t+i\varepsilon)}\biggr]=\\
\begin{cases}
-\exp\biggl[-\dfrac12\ln p(t,\varepsilon)\,\biggr] \sin\biggl(\dfrac12\arctan\dfrac{\varepsilon}{t^2+\varepsilon^2}\biggr), &t^2+\varepsilon^2-t>0;\\
-\exp\biggl[-\dfrac12\ln p(t,\varepsilon)\,\biggr] \sin\biggl(\dfrac12\arctan\dfrac{\varepsilon}{t^2+\varepsilon^2}-\dfrac\pi2\biggr), &t^2+\varepsilon^2-t<0;\\
\exp\biggl(-\dfrac12\ln\dfrac{\varepsilon}{t^2+\varepsilon^2}\,\biggr)\sin\dfrac\pi4, &t^2+\varepsilon^2-t=0.
\end{cases}
\end{multline*}
Consequently,
\begin{equation}\label{im-limit-frac}
\lim_{\varepsilon\to0^+}\Im\biggl[\frac1{h(-t+i\varepsilon)}\biggr]=
\begin{cases}
\sqrt{\dfrac{t}{1-t}}, & 0<t<1;\\
\infty, & t=1;\\
0, & t>1.
\end{cases}
\end{equation}
\par
Let $D$ be a bounded domain with piecewise smooth boundary. The famous Cauchy integral formula (See~\cite[p.~113]{Gamelin-book-2001}) reads that if $f(z)$ is analytic on $D$, and $f(z)$ extends smoothly to the boundary of $D$, then
\begin{equation}\label{cauchy-formula=eq}
f(z)=\frac1{2\pi i}\oint_{\partial D}\frac{f(w)}{w-z}\td w,\quad z\in D.
\end{equation}
\par
For any fixed point $z\in\mathbb{C}\setminus(-\infty,0]$, choose $0<\varepsilon<1$ and $r>0$ such that $0<\varepsilon<|z|<r$, and consider the positively oriented contour $C(\varepsilon,r)$ in $\mathbb{C}\setminus(-\infty,0]$ consisting of the half circle $z=\varepsilon e^{i\theta}$ for $\theta\in\bigl[-\frac\pi2,\frac\pi2\bigr]$ and the half lines $z=x\pm i\varepsilon$ for $x\le0$ until they cut the circle $|z|=r$, which close the contour at the points $-r(\varepsilon)\pm i\varepsilon$, where $0<r(\varepsilon)\to r$ as $\varepsilon\to0$. See Figure~\ref{note-on-li-chen-conj.eps}.
\begin{figure}[htbp]
\includegraphics[width=0.75\textwidth]{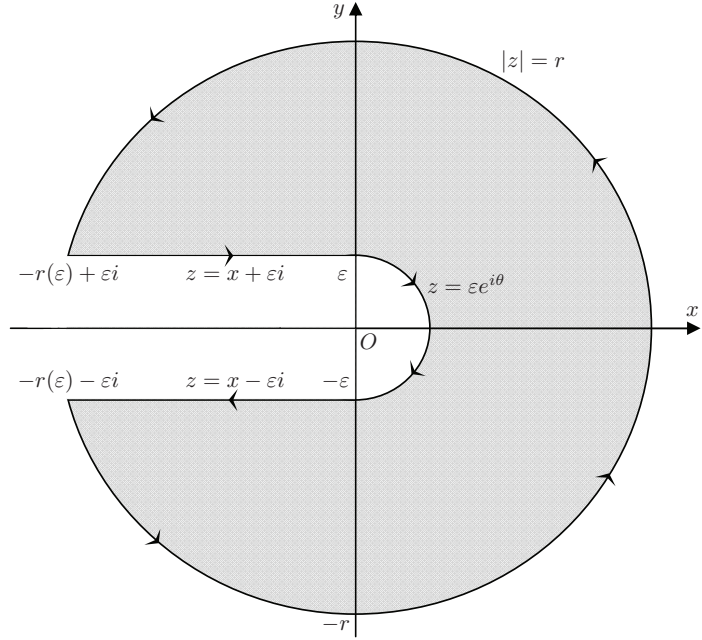}\\
\caption{The contour $C(\varepsilon,r)$}\label{note-on-li-chen-conj.eps}
\end{figure}
\par
By the above mentioned Cauchy integral formula, we have
\begin{align}
h(z)&=\frac1{2\pi i}\oint_{C(\varepsilon,r)}\frac{h(w)}{w-z}\td w\notag\\
&=\frac1{2\pi i}\biggl[\int_{\pi/2}^{-\pi/2}\frac{i\varepsilon e^{i\theta}h\bigl(\varepsilon e^{i\theta}\bigr)}{\varepsilon e^{i\theta}-z}\td\theta +\int_{-r(\varepsilon)}^0 \frac{h(x+i\varepsilon)}{x+i\varepsilon-z}\td x \label{h(z)-Cauchy-Apply}\\
&\quad+\int_0^{-r(\varepsilon)}\frac{h(x-i\varepsilon)}{x-i\varepsilon-z}\td x +\int_{\arg[-r(\varepsilon)-i\varepsilon]}^{\arg[-r(\varepsilon)+i\varepsilon]}\frac{ir e^{i\theta}h\bigl(re^{i\theta}\bigr)}{re^{i\theta}-z}\td\theta\biggr].\notag
\end{align}
By the limit~\eqref{zh(z)=0}, it follows that
\begin{equation}\label{zf(z)=0}
\lim_{\varepsilon\to0^+}\int_{\pi/2}^{-\pi/2}\frac{i\varepsilon e^{i\theta}h\bigl(\varepsilon e^{i\theta}\bigr)}{\varepsilon e^{i\theta}-z}\td\theta=0.
\end{equation}
In virtue of the limit~\eqref{sqrt-1=frac-z}, it can be derived that
\begin{equation}\label{big-circle-int=0}
\lim_{\substack{\varepsilon\to0^+\\r\to\infty}} \int_{\arg[-r(\varepsilon)-i\varepsilon]}^{\arg[-r(\varepsilon)+i\varepsilon]}\frac{ir e^{i\theta}h\bigl(re^{i\theta}\bigr)}{re^{i\theta}-z}\td\theta
=\lim_{r\to\infty}\int_{-\pi}^{\pi}\frac{ir e^{i\theta}h\bigl(re^{i\theta}\bigr)}{re^{i\theta}-z}\td\theta=2\pi i.
\end{equation}
Making use of the limits~\eqref{h(z)-h(z)-conjugate} and~\eqref{im-limit} yields that
\begin{multline}
\int_{-r(\varepsilon)}^0 \frac{h(x+i\varepsilon)}{x+i\varepsilon-z}\td x
+\int_0^{-r(\varepsilon)}\frac{h(x-i\varepsilon)}{x-i\varepsilon-z}\td x
=\int_{-r(\varepsilon)}^0 \biggl[\frac{h(x+i\varepsilon)}{x+i\varepsilon-z} -\frac{h(x-i\varepsilon)}{x-i\varepsilon-z}\biggr]\td x\\
\begin{aligned}
&=\int_{-r(\varepsilon)}^0\frac{(x-i\varepsilon-z)h(x+i\varepsilon) -(x+i\varepsilon-z)h(x-i\varepsilon)} {(x+i\varepsilon-z)(x-i\varepsilon-z)}\td x\\
&=\int_{-r(\varepsilon)}^0\frac{(x-z)[h(x+i\varepsilon)-h(x-i\varepsilon)] -i\varepsilon[h(x-i\varepsilon)+h(x+i\varepsilon)]} {(x+i\varepsilon-z)(x-i\varepsilon-z)}\td x\\
&=2i\int_{-r(\varepsilon)}^0\frac{(x-z)\Im h(x+i\varepsilon) -\varepsilon\Re h(x+i\varepsilon)} {(x+i\varepsilon-z)(x-i\varepsilon-z)}\td x\\
&\to2i\int_{-r}^0\frac{\lim_{\varepsilon\to0^+}\Im h(x+i\varepsilon)}{x-z}\td x\\
&=-2i\int^r_0\frac{\lim_{\varepsilon\to0^+}\Im h(-t+i\varepsilon)}{t+z}\td t\\
&\to-2i\int^\infty_0\frac{\lim_{\varepsilon\to0^+}\Im h(-t+i\varepsilon)}{t+z}\td t\\
&=2i\int_0^1\sqrt{\frac1t-1}\,\frac{\td t}{t+z} \label{level0lines}
\end{aligned}
\end{multline}
as $\varepsilon\to0^+$ and $r\to\infty$. Substituting equations~\eqref{zf(z)=0}, \eqref{big-circle-int=0}, and~\eqref{level0lines} into~\eqref{h(z)-Cauchy-Apply} and simplifying produce the integral representation~\eqref{thm-zhang-li-qi-eq}.
\par
Similarly, by the above mentioned Cauchy integral formula, we have
\begin{align}
\frac1{h(z)}&=\frac1{2\pi i}\oint_{C(\varepsilon,r)}\frac{1/h(w)}{w-z}\td w\notag\\
&=\frac1{2\pi i}\biggl[\int_{\pi/2}^{-\pi/2}\frac{i\varepsilon e^{i\theta}\bigr[1/h\bigl(\varepsilon e^{i\theta}\bigr)\bigr]}{\varepsilon e^{i\theta}-z}\td\theta +\int_{-r(\varepsilon)}^0 \frac{1/h(x+i\varepsilon)}{x+i\varepsilon-z}\td x \label{h(z)-frac-Cauchy-Apply}\\
&\quad+\int_0^{-r(\varepsilon)}\frac{1/h(x-i\varepsilon)}{x-i\varepsilon-z}\td x +\int_{\arg[-r(\varepsilon)-i\varepsilon]}^{\arg[-r(\varepsilon)+i\varepsilon]}\frac{ir e^{i\theta}\bigl[1/h\bigl(re^{i\theta}\bigr)\bigr]}{re^{i\theta}-z}\td\theta\biggr].\notag
\end{align}
From the limit~\eqref{frac-z=h(z)=0}, it follows that
\begin{equation}\label{zf(z)=0frac}
\lim_{\varepsilon\to0^+}\int_{\pi/2}^{-\pi/2}\frac{i\varepsilon e^{i\theta}\bigr[1/h\bigl(\varepsilon e^{i\theta}\bigr)\bigr]}{\varepsilon e^{i\theta}-z}\td\theta=0.
\end{equation}
By virtue of the limit~\eqref{sqrt-1=frac-z-recip}, it may be deduced that
\begin{equation}\label{big-circle-int=0frac}
\lim_{\substack{\varepsilon\to0^+\\r\to\infty}} \int_{\arg[-r(\varepsilon)-i\varepsilon]}^{\arg[-r(\varepsilon)+i\varepsilon]}\frac{ir e^{i\theta}\bigl[1/h\bigl(re^{i\theta}\bigr)\bigr]}{re^{i\theta}-z}\td\theta=2\pi i.
\end{equation}
Employing the limits~\eqref{h(z)-frac-conjugate} and~\eqref{im-limit-frac} yields that
\begin{align}
&\quad\int_{-r(\varepsilon)}^0 \frac{1/h(x+i\varepsilon)}{x+i\varepsilon-z}\td x
+\int_0^{-r(\varepsilon)}\frac{1/h(x-i\varepsilon)}{x-i\varepsilon-z}\td x \notag\\
&=2i\int_{-r(\varepsilon)}^0\frac{(x-z)\Im [1/h(x+i\varepsilon)] -\varepsilon\Re [1/h(x+i\varepsilon)]} {(x+i\varepsilon-z)(x-i\varepsilon-z)}\td x\notag\\
&\to2i\int_{-r}^0\frac{\lim_{\varepsilon\to0^+}\Im [1/h(x+i\varepsilon)]}{x-z}\td x \quad \text{as $\varepsilon\to0^+$}\notag\\
&\to-2i\int^\infty_0\frac{\lim_{\varepsilon\to0^+}\Im h(-t+i\varepsilon)}{t+z}\td t \quad \text{as $r\to\infty$}\notag\\
&=-2i\int_0^1\sqrt{\frac{t}{1-t}}\,\frac{\td t}{t+z}. \label{level0lines=frac}
\end{align}
Substituting equations~\eqref{zf(z)=0frac}, \eqref{big-circle-int=0frac}, and~\eqref{level0lines=frac} into~\eqref{h(z)-frac-Cauchy-Apply} and simplifying produce the integral representation~\eqref{thm-zhang-li-qi-eq-frac}.
\par
Adding~\eqref{thm-zhang-li-qi-eq} and~\eqref{thm-zhang-li-qi-eq-frac} leads to
\begin{align*}
H(z)&=2+\frac1\pi\int_0^1q(u)\frac{\td u}{u+z}\\
&=2+\frac1\pi\int_0^1q(u)\int_0^\infty e^{-(u+z)s}\td s\td u \\
&=2+\frac1\pi\int_0^\infty\biggl[\int_0^1 q(u)e^{-us}\td u\biggr]e^{-zs}\td s.
\end{align*}
Utilizing $q(u)=-q(1-u)$ for $u\in(0,1)$ or $q\bigl(\frac12+u\bigr)=-q\bigl(\frac12-u\bigr)$ for $u\in\bigl(0,\frac12\bigr)$ results in
\begin{multline*}
\int_0^1 q(u)e^{-us}\td u=\int_0^{1/2} q(u)e^{-us}\td u+\int_{1/2}^1 q(u)e^{-us}\td u\\
=\int_0^{1/2} q(u)e^{-us}\td u+\int_0^{1/2} q(1-u)e^{-(1-u)s}\td u\\
=\int_0^{1/2} q(u)\bigl[e^{-us}-e^{-(1-u)s}\bigr]\td u
=\int_0^{1/2} q(u)\bigl[1-e^{-(1-2u)s}\bigr]e^{-us}\td u
\ge0
\end{multline*}
or
\begin{align*}
\int_0^1 q(u)e^{-us}\td u&=\int_0^{1/2} q\biggl(\frac12-u\biggr)e^{-(1/2-u)s}\td u +\int_0^{1/2}q\biggl(\frac12+u\biggr)e^{-(1/2+u)s}\td u\\
&=\int_0^{1/2} q\biggl(\frac12-u\biggr)\bigl[e^{-(1/2-u)s}-e^{-(1/2+u)s}\bigr]\td u\\
&=\int_0^{1/2} q\biggl(\frac12-u\biggr)\bigl(e^{us}-e^{-us}\bigr)e^{-s/2}\td u\\
&\ge0.
\end{align*}
The proof of Lemma~\ref{lem2-zhang-li-qi-eq} is thus completed.
\end{proof}

\begin{proof}[Proof by Stieltjes-Perron inversion formula]
The property~(x) in~\cite[Theorem~1.3]{Kalugin-Jeffrey-Corless-Borwein} formulates that if $f\in\mathcal{S}$ then $f^\alpha(0^+)-f^\alpha\bigl(\frac1t\bigr)\in\mathcal{S}$ for $0\le\alpha\le1$.
Since $h(t)\in\mathcal{S}$, see~\eqref{h(t)inS-eq}, and, by the property~(i) in~\cite[Theorem~1.3]{Kalugin-Jeffrey-Corless-Borwein}, $\frac1{h(1/t)}\in\mathcal{S}$, replacing $f$ by $\frac1{h(1/t)}$, making use of the easy fact that $f(0^+)=\lim_{t\to0^+}f(t)=1$, and letting $\alpha=1$ yield $1-\frac1{h(t)}\in\mathcal{S}$.
\par
For a Stieltjes function $f$ given by~\eqref{dfn-stieltjes}, by the Stieltjes-Perron inversion formula in~\cite[p.~591]{Henrici-book-77}, we can determine the scalars $a=\lim_{x\to0^+}[xf(x)]$ and $b=\lim_{x\to\infty}f(x)$ and the measure
\begin{equation}
\mu(s)=-\frac1\pi\lim_{t\to0^+}\Im\int_{-\infty}^{-s}f(u+ti)\td u,
\end{equation}
as done in~\cite{CBerg, Kalugin-Jeffrey-Corless-640327}. Specially, for the function $h(x)$, since $a=\lim_{x\to0^+}[xh(x)]=0$ and $b=\lim_{x\to\infty}h(x)=1$, we have
\begin{equation}\label{h(z)=Phi-eq}
h(z)=1+\int_0^\infty\frac{\td\Phi(u)}{u+z}
\end{equation}
for $|\arg z|<\pi$, where
\begin{equation*}
\Phi(u)=\frac1\pi\lim_{s\to0^+}\int_u^\infty\Im\sqrt{1-\frac1{\tau-is}}\,\td\tau =-\frac1\pi\int_u^\infty\sqrt{\frac1\tau-1}\,\td\tau
\end{equation*}
when $0<\tau<1$ and $\Phi(u)=0$ when $\tau>1$ because taking $s\to0^+$ we obtain
\begin{equation*}
\Im\sqrt{1-\frac1{\tau-is}}\,=\Im\sqrt{\frac{-[\tau(1-\tau)-s^2]-si}{\tau^2+s^2}}\, \to-\sqrt{\frac{\tau(1-\tau)}{\tau^2}}\,
\end{equation*}
when $0<\tau<1$ and $\Im\sqrt{1-\frac1{\tau-si}}\,\to0$ when $\tau>1$. Thus we find
\begin{equation*}
\Phi'(u)=\frac1\pi\sqrt{\frac1u-1}\,
\end{equation*}
when $0<u<1$ and $\Phi'(u)=0$ when $u>1$. Substituting $\Phi'(u)$ in the representation~\eqref{h(z)=Phi-eq} results in the formula~\eqref{thm-zhang-li-qi-eq}.
\par
The formula~\eqref{thm-zhang-li-qi-eq-frac} for $\frac1{h(z)}$ or for $1-\frac1{h(z)}$ can be derived in a similar way as above.
\par
The rest is the same as in the first proof. Lemma~\ref{lem2-zhang-li-qi-eq} is proved once again.
\end{proof}

\section{The harmonic mean is a Bernstein function}

Our results on the harmonic mean $H_{x,y}(t)$ may be stated as the theorem below.

\begin{thm}\label{harmonic-bernstein-thm}
The harmonic mean $H_{x,y}(t)$ defined by~\eqref{harmnic=x-y+t-eq} is a Bernstein function of $t$ on $(-\min\{x,y\},\infty)$ for $x,y>0$ with $x\ne y$ and has the integral representation
\begin{equation}\label{H-int-reprent}
H_{x,y}(t)=H(x,y)+t+\frac{(x-y)^2}4 \int_0^\infty\bigl(1-e^{-tu}\bigr) e^{-(x+y)u/2}\td u.
\end{equation}
Consequently,
\begin{align}\label{H-int-reprent-A=H}
H(x,y)&=A(x,y)-\frac{(x-y)^2}2 \int_0^\infty e^{-(x+y)u}\td u\\
H(s,y+s)&=s+\frac{y^2}4 \int_0^\infty\bigl(1-e^{-su}\bigr) e^{-yu/2}\td u,\quad s>0.\label{H(s=y+s)-int-eq}
\end{align}
\end{thm}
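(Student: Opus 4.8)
The plan is to reduce $H_{x,y}(t)$ to an explicit elementary function of $t$, from which both assertions of Theorem~\ref{harmonic-bernstein-thm} become almost immediate. The starting point is the elementary identity $A(a,b)-H(a,b)=\dfrac{(a-b)^2}{2(a+b)}$ applied to $a=x+t$, $b=y+t$: since $a+b=x+y+2t$, $a-b=x-y$, and $A(x+t,y+t)=A(x,y)+t$, this yields
\begin{equation*}
H_{x,y}(t)=A(x,y)+t-\frac{(x-y)^2}{2(x+y+2t)},
\end{equation*}
which at $t=0$ already records $H(x,y)=A(x,y)-\dfrac{(x-y)^2}{2(x+y)}$, i.e.\ formula~\eqref{H-int-reprent-A=H} once one writes $\dfrac1{x+y}=\int_0^\infty e^{-(x+y)u}\,\td u$.

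To see $H_{x,y}\in\mathcal B[(-\min\{x,y\},\infty)]$ I would differentiate the closed form to get $H_{x,y}'(t)=1+\dfrac{(x-y)^2}{(x+y+2t)^2}$. With $c=\frac{x+y}2$ this is $1+\dfrac{(x-y)^2}{4}\cdot\dfrac1{(c+t)^2}$, the sum of the constant $1$ and a positive multiple of $\dfrac1{(c+t)^2}$, both completely monotonic on $(-c,\infty)$; and $(-\min\{x,y\},\infty)\subseteq(-c,\infty)$ since $\min\{x,y\}\le c$. Hence $H_{x,y}'\in\mathcal C[(-\min\{x,y\},\infty)]$, while $H_{x,y}(t)=H(x+t,y+t)>0$ on that interval because both arguments are positive; so $H_{x,y}$ is a Bernstein function there.

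For the integral representation~\eqref{H-int-reprent} the plan is to compute, for $t>-c$,
\begin{equation*}
\int_0^\infty\bigl(1-e^{-tu}\bigr)e^{-(x+y)u/2}\,\td u=\frac2{x+y}-\frac2{x+y+2t},
\end{equation*}
solve this for $\dfrac1{x+y+2t}$, substitute into the closed form above, and collect terms using $A(x,y)-\dfrac{(x-y)^2}{2(x+y)}=H(x,y)$; this reproduces~\eqref{H-int-reprent} verbatim. (Equivalently it exhibits $H_{x,y}$ in the canonical form~\eqref{Bernstein-Characterize} with $a=H(x,y)$, $b=1$, $\td\mu(u)=\frac{(x-y)^2}{4}e^{-(x+y)u/2}\,\td u$, reconfirming the Bernstein property.) Finally the two corollaries follow by limits in~\eqref{H-int-reprent}: letting $t\to\infty$ gives~\eqref{H-int-reprent-A=H} (the left side minus $t$ tends to $A(x,y)$ by the closed form, and the integral tends to $\frac2{x+y}$), while applying~\eqref{H-int-reprent} to the pair $(\delta,\delta+y)$ with $s$ in place of $t$ and letting $\delta\to0^+$, using continuity of $H$, $H(\delta,\delta+y)\to0$, and dominated convergence for the integral, gives~\eqref{H(s=y+s)-int-eq}.

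I do not expect a real obstacle: the argument is essentially the algebraic simplification in the first display plus the standard fact that $\dfrac1{(c+t)^2}$ is completely monotonic on $(-c,\infty)$. The only points worth a sentence of care are that the admissible interval $(-\min\{x,y\},\infty)$ lies strictly to the right of the simple pole at $t=-\frac{x+y}2$, and the routine justification of the two limit passages used for~\eqref{H-int-reprent-A=H} and~\eqref{H(s=y+s)-int-eq}.
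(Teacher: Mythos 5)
Your proposal is correct and follows essentially the same route as the paper: both arguments rest on the identity $H_{x,y}'(t)=1+\frac{(x-y)^2}{(x+y+2t)^2}$, its complete monotonicity on $(-\min\{x,y\},\infty)$, the elementary Laplace transform $\frac1{x+y+2t}=\frac12\int_0^\infty e^{-(x+y+2t)u/2}\,\td u$ to produce~\eqref{H-int-reprent}, and the limits $t\to\infty$ and $x\to0^+$ for the two corollaries. The only cosmetic difference is that you first extract the closed form $H_{x,y}(t)=A(x,y)+t-\frac{(x-y)^2}{2(x+y+2t)}$ and verify the integral representation by evaluating the integral, whereas the paper integrates the Laplace representation of $H_{x,y}'$ from $0$ to $t$; the substance is the same.
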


\begin{proof}
The harmonic mean $H_{x,y}(t)$ meets
\begin{equation}\label{H-derivat-first}
H_{x,y}'(t)=\frac{2\bigl[x^2+y^2+2(x+y)t+2t^2\bigr]}{(x+y+2t)^2} =1+\frac{(x-y)^2}{(x+y+2t)^2}>1.
\end{equation}
It is obvious that the derivative $H_{x,y}'(t)$ is completely monotonic with respect to $t$. As a result, the harmonic mean $H_{x,y}(t)$ is a Bernstein function of $t$ on $(-\min\{x,y\},\infty)$ for $x,y>0$ with $x\ne y$.
\par
In~\cite[p.~255, 6.1.1]{abram}, it was listed that, for $\Re z>0$ and $\Re k>0$, the classical Euler gamma function
\begin{equation}\label{Gamma(z)=k-z-int}
\Gamma(z)=k^z\int_0^\infty t^{z-1}e^{-kt}\td t.
\end{equation}
This formula can be rearranged as
\begin{equation}\label{Gamma(z)=k-z-int-rearr}
\frac1{z^w}=\frac1{\Gamma(w)}\int_0^\infty t^{w-1}e^{-zt}\td t
\end{equation}
for $\Re z>0$ and $\Re w>0$.
Combining~\eqref{Gamma(z)=k-z-int-rearr} with~\eqref{H-derivat-first} yields
\begin{equation}\label{H-derivat-first-int}
H_{x,y}'(t)=1+(x-y)^2\int_0^\infty ue^{-(x+y+2t)u}\td u,
\end{equation}
and so, by integrating with respect to $t\in(0,s)$ on both sides of~\eqref{H-derivat-first-int}, the formula~\eqref{H-int-reprent} follows.
\par
Letting $s\to\infty$ on both sides of~\eqref{H-int-reprent} and using the limit
$
\lim_{s\to\infty}[H_{x,y}(s)-s]=A(x,y)
$
generate the formula~\eqref{H-int-reprent-A=H}.
\par
Taking $x\to0^+$ in~\eqref{H-int-reprent} produces~\eqref{H(s=y+s)-int-eq}. Theorem~\ref{harmonic-bernstein-thm} is thus proved.
\end{proof}

\begin{rem}
By~\cite[pp.~161\nobreakdash--162, Theorem~3]{Chen-Qi-Srivastava-09.tex} or~\cite[p.~45, Proposition~5.17]{Schilling-Song-Vondracek-2010}, it can be derived that the reciprocal of the harmonic mean $H_{x,y}(t)$, that is, the function $\frac1{A(1/(x+t),1/(y+t))}$, is logarithmically completely monotonic.
\par
This logarithmically complete monotonicity can also be proved by considering
\begin{equation*}
[\ln H_{x,y}(t)]'=\frac{x^2+y^2+2(x+y)t+2t^2}{(x+t)(y+t)(x+y+2t)} =\frac12\biggl(\frac1{x+t}+\frac1{y+t}\biggr) \biggl[1+\frac{(x-y)^2}{(x+y+2t)^2}\biggr]
\end{equation*}
and that the product and sum of finitely many completely monotonic functions are also completely monotonic functions.
\par
Moreover, from~\eqref{H-derivat-first}, it follows readily that $H_{x,y}(t)-t$ is an increasing function in $t\in(-\min\{x,y\},\infty)$ for $x,y>0$ with $x\ne y$.
\end{rem}

\section{The geometric mean is a Bernstein function}

Our results on the geometric mean $G_{x,y}(t)$ can be summarized as two theorems.

\begin{thm}\label{geometric-bernstein-thm}
Let $x,y>0$ with $x\ne y$. Then the geometric mean $G_{x,y}(t)$ defined by~\eqref{G(x=y)(t)} is a Bernstein function of $t$ on $(-\min\{x,y\},\infty)$.
\end{thm}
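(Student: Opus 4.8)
The plan is to reduce $G_{x,y}(t)$, by an affine change of variable, to the auxiliary function $H$ of Lemma~\ref{Geom-Mean-CM-lem-h(t)}, and then to read off the Bernstein property from the complete monotonicity of $H$.

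Since both $G_{x,y}(t)$ and $\min\{x,y\}$ are symmetric in $x$ and $y$, I may assume without loss of generality that $x<y$, so that $-\min\{x,y\}=-x$; put $a=y-x>0$. Then
\begin{equation*}
G_{x,y}(t)=\sqrt{(x+t)(y+t)}\,=\sqrt{(x+t)(x+t+a)}\,=a\,\sqrt{w(w+1)}\,,\qquad w=\frac{x+t}{a},
\end{equation*}
and as $t$ runs over $(-x,\infty)$ the variable $w$ runs over $(0,\infty)$. Because nonnegativity of a function and complete monotonicity of its derivative are both preserved under composition with an increasing affine map and under multiplication by a positive constant, it suffices to prove that $\phi(w)=\sqrt{w(w+1)}=\sqrt{w^2+w}$ is a Bernstein function on $(0,\infty)$.

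The key point is that $\phi$ is an antiderivative of $\frac12H$. Indeed, with $h(w)=\sqrt{1+1/w}\,$ one has
\begin{equation*}
H(w)=h(w)+\frac1{h(w)}=\sqrt{\frac{w+1}{w}}\,+\sqrt{\frac{w}{w+1}}\,=\frac{2w+1}{\sqrt{w^2+w}\,}=2\phi'(w),
\end{equation*}
so $\phi'(w)=\frac12H(w)$ on $(0,\infty)$. By Lemma~\ref{Geom-Mean-CM-lem-h(t)}, $H$ is completely monotonic on $(0,\infty)$, hence so is $\phi'$; since in addition $\phi\ge0$ and $\phi$ is infinitely differentiable on $(0,\infty)$, the function $\phi$ is by definition a Bernstein function on $(0,\infty)$. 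Transporting back through $w=(x+t)/a$ and multiplying by $a>0$ then yields $G_{x,y}(t)\in\mathcal{B}[(-\min\{x,y\},\infty)]$.

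I do not expect any real obstacle once the identity $\phi'=\frac12H$ is noticed; the only thing to handle carefully is the elementary bookkeeping showing that $\frac12H\bigl((x+t)/a\bigr)$ is completely monotonic in $t$ on $(-x,\infty)$, which holds because $t\mapsto(x+t)/a$ is affine with positive slope and carries $(-x,\infty)$ onto $(0,\infty)$. An alternative, not relying on Lemma~\ref{Geom-Mean-CM-lem-h(t)}, uses only that $\frac1s$ and $\frac1{s+a}$ are Stieltjes functions, hence so is $1+h(s/a)=1+\sqrt{1+a/s}\,$, so that by~\eqref{SB-relation} its reciprocal is a Bernstein function and
\begin{equation*}
G_{x,y}(t)=(x+t)+\bigl[\sqrt{(x+t)(y+t)}\,-(x+t)\bigr]=(x+t)+\frac{a}{1+\sqrt{1+a/(x+t)}\,}
\end{equation*}
is a sum of two Bernstein functions of $t$ on $(-x,\infty)$; I would present the first route, as it is shorter and also prepares the integral representation to be established in the next theorem.
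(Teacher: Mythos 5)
Your main route is correct and coincides with the paper's third proof: after normalizing by the affine substitution, the identity $G_{x,y}'(t)=\frac12H\bigl(\frac{y+t}{x-y}\bigr)$ (your $\phi'=\frac12H$) together with the complete monotonicity of $H$ from Lemma~\ref{Geom-Mean-CM-lem-h(t)} is exactly how the paper concludes. The paper also offers two other proofs (logarithmic complete monotonicity of $G_{x,y}'$ via an integral for $\ln G_{x,y}'$, and a composition argument with $f(u)=\frac12(\sqrt u+1/\sqrt u)$), but no comparison is needed since your argument is one of them.
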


We supply three proofs of Theorems~\ref{geometric-bernstein-thm}.

\begin{proof}[First proof]
By a direct differentiation, we have
\begin{equation*}
G_{x,y}'(t)=\sqrt{\frac{x+t}{y+t}}\,\frac{x+y+2t}{2(x+t)}.
\end{equation*}
Taking the logarithm on both sides of the above equality creates
\begin{equation}\label{lnG(x-y)(t)}
\ln G_{x,y}'(t)=\frac12\ln\frac{x+t}{y+t}+\ln\frac{x+y+2t}{2(x+t)}.
\end{equation}
In~\cite[p.~230, 5.1.32]{abram}, it was collected that for $a>0$ and $b>0$,
\begin{equation}\label{ln-frac}
\ln\frac{b}a=\int_0^\infty\frac{e^{-au}-e^{-bu}}u\td u.
\end{equation}
Using this formula in~\eqref{lnG(x-y)(t)} leads to
\begin{equation*}
\ln G_{x,y}'(t)=\int_0^\infty\frac{e^{-(x+t)v}+e^{-(y+t)v}-2e^{-v[(x+t)+(y+t)]/2}}{2v}\td v.
\end{equation*}
Since the function $e^{-t}$ is convex on $\mathbb{R}$, we have
\begin{equation*}
e^{-(x+t)v}+e^{-(y+t)v}-2e^{-v[(x+t)+(y+t)]/2}\ge0.
\end{equation*}
Therefore, we have
\begin{equation*}
[\ln G_{x,y}'(t)]^{(k)}=\frac{(-1)^k}2 \int_0^\infty\bigr\{e^{-(x+t)v}+e^{-(y+t)v}-2e^{-v[(x+t)+(y+t)]/2}\bigr\} v^{k-1}\td v.
\end{equation*}
This means that the derivative $G_{x,y}'(t)$ is logarithmically completely monotonic, and so it is also completely monotonic. As a result, the geometric mean $G_{x,y}(t)$ is a Bernstein function.
\end{proof}

\begin{proof}[Second proof]
It is clear that the geometric mean $G_{x,y}(t)$ satisfies
\begin{equation}\label{G(x=y)(t)-deriv}
G_{x,y}'(t)=\frac12\biggl(\sqrt{\frac{x+t}{y+t}}\, +\sqrt{\frac{y+t}{x+t}}\,\biggr) =\frac12\biggl(\sqrt{u}+\frac1{\sqrt{u}}\biggr) \triangleq f(u)
\end{equation}
and
\begin{equation}\label{G(x=y)(t)-deriv-log}
[\ln G_{x,y}(t)]'=\frac12\biggl(\frac1{x+t}+\frac1{y+t}\biggr),
\end{equation}
where
\begin{equation}\label{var-u-def-eq}
u\triangleq u_{x,y}(t)=\frac{x+t}{y+t}=1+\frac{x-y}{y+t}.
\end{equation}
If $0<x<y$, then $0<u_{x,y}(t)<1$ for $t\in(-x,\infty)$ and $u_{x,y}'(t)=\frac{y-x}{(y+t)^2}$ is completely monotonic in $t\in(-x,\infty)$. On the other hand, the function $f(u)$ is positive and
\begin{align*}
f^{(i)}(u)&=\frac12\biggl[(-1)^{i-1}\frac{(2i-3)!!}{2^i}u^{-(2i-1)/2} +(-1)^{i}\frac{(2i-1)!!}{2^{i}}u^{-(2i+1)/2}\biggr]\\
&=\frac{(-1)^{i}(2i-3)!!}{2^{i+1}} \frac1{u^{(2i-1)/2}} \biggl(\frac{2i-1}{u}-1\biggr)
\end{align*}
for $i\in\mathbb{N}$, which implies that the function $f(u)$ is completely monotonic on $(0,1)$; A ready modification of a conclusion in~\cite[p.~83]{bochner} yields the following conclusion: If $g$ and $h'$ are completely monotonic functions such that $g(h(x))$ is defined on an interval $I$, then $x\mapsto g(h(x))$ is also completely monotonic on $I$; So, when $y>x>0$, the derivative $G_{x,y}'(t)$ is completely monotonic and the geometric mean $G_{x,y}(t)$ is a Bernstein function. Consequently, considering the symmetric property $G_{x,y}(t)=G_{y,x}(t)$, it is easily obtained that the geometric mean $G_{x,y}(t)$ for $t\in(-\min\{x,y\},\infty)$ with $x\ne y$ is a Bernstein function.
\end{proof}

\begin{rem}
From the equality in~\eqref{G(x=y)(t)-deriv}, it is easy to derive that the function $G_{x,y}(t)-t$ is increasing in $t\in(-\min\{x,y\},\infty)$ for $x,y>0$ with $x\ne y$.
\par
From~\eqref{G(x=y)(t)-deriv-log}, it is immediate to deduce that the reciprocal of the geometric mean $G_{x,y}(t)$ is a logarithmically completely monotonic function of $t\in(-\min\{x,y\},\infty)$ for $x,y>0$ with $x\ne y$.
\end{rem}

\begin{proof}[Third proof]
By~\eqref{G(x=y)(t)-deriv} and~\eqref{var-u-def-eq}, it follows that
\begin{equation}\label{G-deriv=H-relation}
G_{x,y}'(t)=\frac12\biggl[h\biggl(\frac{y+t}{x-y}\biggr)+\frac1{h\bigl(\frac{y+t}{x-y}\bigr)}\biggr] =\frac12H\biggl(\frac{y+t}{x-y}\biggr)
\end{equation}
and
\begin{equation*}
[G_{x,y}'(t)]^{(i)}=\frac1{2(x-y)^i}H^{(i)}\biggl(\frac{y+t}{x-y}\biggr)
\end{equation*}
for $i\in\{0\}\cup\mathbb{N}$.
By the formula~\eqref{H(t)-expression} in Lemma~\ref{Geom-Mean-CM-lem-h(t)}, we have
\begin{multline*}
[G_{x,y}'(t)]^{(i)}=\frac{(-1)^i}{2^{i+1}\bigl(\frac{y+t}{x-y}\bigr)^{i+1} (x+t)^{i}h\bigl(\frac{y+t}{x-y}\bigr)}\\
\times\sum_{k=0}^{i-1}\frac{(i-1)!(i+1)!(2i-2k-1)!!} {(i-k-1)!(i-k+1)!k!}2^{k}\biggl(\frac{y+t}{x-y}\biggr)^k,
\end{multline*}
which means that, when $x>y$, the derivative $G_{x,y}'(t)$ is completely monotonic. Since $G_{x,y}(t)=G_{y,x}(t)$, when $x<y$, the derivative $G_{y,x}'(t)$ is also completely monotonic. This implies that the geometric mean $G_{x,y}(t)$ is a Bernstein function of $t\in(-\min\{x,y\},\infty)$.
\end{proof}

\begin{thm}\label{geometric-integral-thm}
For $x>y>0$ and $z\in\mathbb{C}\setminus(-\infty,-y]$, the geometric mean $G_{x,y}(z)$ has the integral representation
\begin{equation}\label{G(x-y)-integ-exp}
G_{x,y}(z)=G(x,y)+z+\frac{x-y}{2\pi} \int_0^\infty\frac{\rho((x-y)s)}s e^{-ys}\bigl(1-e^{-sz}\bigr)\td s,
\end{equation}
where the function $\rho$ is defined by~\eqref{rho-funct-dfn}. Consequently, the geometric mean $G_{x,y}(t)$ is a Bernstein function of $t$ on $(-\min\{x,y\},\infty)$.
\end{thm}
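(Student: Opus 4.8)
The plan is to obtain~\eqref{G(x-y)-integ-exp} by integrating once the relation~\eqref{G-deriv=H-relation} between $G_{x,y}'$ and the function $H$ of Lemma~\ref{Geom-Mean-CM-lem-h(t)}, after inserting into it the integral representation~\eqref{H(z)=int-expression} of $H$ established in Lemma~\ref{lem2-zhang-li-qi-eq}. Since $x>y>0$, the map $z\mapsto\frac{y+z}{x-y}$ carries $\mathbb{C}\setminus(-\infty,-y]$ into $\mathbb{C}\setminus(-\infty,0]$, so $\frac12H\bigl(\frac{y+z}{x-y}\bigr)$ is analytic on $\mathbb{C}\setminus(-\infty,-y]$; as it agrees with $G_{x,y}'(t)$ for real $t>-y$ by~\eqref{G-deriv=H-relation}, it is the analytic continuation of $G_{x,y}'$, and $G_{x,y}$ is then its primitive normalized by $G_{x,y}(0)=\sqrt{xy}=G(x,y)$ (the slit plane $\mathbb{C}\setminus(-\infty,-y]$ being simply connected, this primitive is single-valued). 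Substituting~\eqref{H(z)=int-expression} and then changing the variable $s\mapsto(x-y)u$ give
\begin{align*}
G_{x,y}'(z)&=1+\frac1{2\pi}\int_0^\infty\rho(s)\,e^{-(y+z)s/(x-y)}\td s\\
&=1+\frac{x-y}{2\pi}\int_0^\infty\rho((x-y)u)e^{-(y+z)u}\td u .
\end{align*}

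Next I would integrate this identity along any path from $0$ to $z$ inside $\mathbb{C}\setminus(-\infty,-y]$, interchange the order of the two integrations by Fubini's theorem, and use $\int_0^ze^{-(y+w)u}\td w=e^{-yu}\frac{1-e^{-zu}}u$; together with $G_{x,y}(0)=G(x,y)$ this yields exactly~\eqref{G(x-y)-integ-exp} after renaming $u$ as $s$. These manipulations — and the differentiations under the integral sign used below — are legitimate because the density $\rho$ is bounded on $(0,\infty)$: from the second expression in~\eqref{rho-funct-dfn} and the inequality $e^{us}e^{-s/2}\le1$ for $0<u<\frac12$ one gets $0\le\rho(s)\le\int_0^{1/2}q\bigl(\frac12-u\bigr)\td u<\infty$, the function $q$ having only an integrable singularity at the left endpoint; moreover $\rho(s)=O(s)$ as $s\to0^+$, which absorbs the apparent $\frac1s$ in~\eqref{G(x-y)-integ-exp}, while the factor $e^{-(y+z)u}$ with $\Re(y+z)>0$ supplies exponential decay at $u=\infty$.

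For the concluding assertion, restrict~\eqref{G(x-y)-integ-exp} to real $z=t\in(-y,\infty)=(-\min\{x,y\},\infty)$. Since $\rho\ge0$ on $(0,\infty)$ by Lemma~\ref{lem2-zhang-li-qi-eq}, the measure $\td\nu(s)=\frac{x-y}{2\pi}\frac{\rho((x-y)s)}s\,e^{-ys}\td s$ is nonnegative, and~\eqref{G(x-y)-integ-exp} becomes $G_{x,y}(t)=G(x,y)+t+\int_0^\infty\bigl(1-e^{-ts}\bigr)\td\nu(s)$. Differentiating under the integral sign gives $G_{x,y}'(t)=1+\int_0^\infty s\,e^{-ts}\td\nu(s)$, whence $(-1)^n\bigl[G_{x,y}'\bigr]^{(n)}(t)=\int_0^\infty s^{n+1}e^{-ts}\td\nu(s)\ge0$ for every $n\in\mathbb{N}$ and $t>-y$; thus $G_{x,y}'$ is completely monotonic on $(-y,\infty)$. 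As $G_{x,y}(t)=\sqrt{(x+t)(y+t)}\ge0$ there, the definition of a Bernstein function gives the claim when $x>y>0$, and the case $x<y$ then follows from the symmetry $G_{x,y}=G_{y,x}$.

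I expect the only genuinely delicate steps to be the passage from the real identity~\eqref{G-deriv=H-relation} to its complex analytic continuation on the slit plane (with the attendant care about branches) and the verification that $\rho$ is bounded, which is what licenses Fubini's theorem; granting these, the rest is a routine termwise computation.
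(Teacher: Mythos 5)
Your proposal is correct and follows essentially the same route as the paper: both express $G_{x,y}'(z)$ as $\frac12H\bigl(\frac{y+z}{x-y}\bigr)$, insert the representation~\eqref{H(z)=int-expression} from Lemma~\ref{lem2-zhang-li-qi-eq}, integrate from $0$ to $z$, and interchange the order of the two integrations. The only cosmetic differences are that you verify the complete monotonicity of $G_{x,y}'$ by differentiating under the integral sign where the paper simply cites the characterization~\eqref{Bernstein-Characterize}, and that you supply the estimates on $\rho$ (boundedness and $\rho(s)=O(s)$ as $s\to0^+$) that license Fubini's theorem, which the paper leaves implicit.
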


\begin{proof}
For $x>y>0$ and $z\in\mathbb{C}\setminus(-\infty,-y]$, making use of
\begin{equation*}
G_{x,y}'(z)=\frac12\biggl[h\biggl(\frac{y+z}{x-y}\biggr)+\frac1{h\bigl(\frac{y+z}{x-y}\bigr)}\biggr] =\frac12H\biggl(\frac{y+z}{x-y}\biggr)
\end{equation*}
and~\eqref{H(z)=int-expression} gives
\begin{equation*}
G_{x,y}'(z)=1+\frac1{2\pi}\int_0^\infty\rho(s)\exp\biggl(-\frac{y+z}{x-y}s\biggr)\td s.
\end{equation*}
Integrating with respect to $z$ from $0$ to $w$ on both sides of the above equation and interchanging the order of integrals yield
\begin{align*}
G_{x,y}(w)-G_{x,y}(0)&=w+\frac{x-y}{2\pi}\int_0^\infty\frac{\rho(s)}s\exp\biggl(-\frac{ys}{x-y}\biggr) \biggl[1-\exp\biggl(-\frac{sw}{x-y}\biggr)\biggr]\td s\\
&=w+\frac{x-y}{2\pi}\int_0^\infty\frac{\rho((x-y)s)}s e^{-ys}\bigl(1-e^{-ws}\bigr)\td s.
\end{align*}
Since $G_{x,y}(0)=G(x,y)$, the integral representation~\eqref{G(x-y)-integ-exp} is readily deduced.
\par
By the characterization expressed by~\eqref{Bernstein-Characterize} and the integral representation~\eqref{G(x-y)-integ-exp} applied to $z=t\in(-\min\{x,y\},\infty)$, it is immediate to see that the geometric mean $G_{x,y}(t)$ is a Bernstein function of $t$ on $(-\min\{x,y\},\infty)$.
\end{proof}

\begin{rem}
Taking $z\to\infty$ in~\eqref{G(x-y)-integ-exp} and using
$
\lim_{z\to\infty}[G_{x,y}(z)-z]=A(x,y)
$
yield
\begin{equation}\label{G(x-y)-integ-exp-A}
A(x,y)=G(x,y)+\frac{x-y}{2\pi} \int_0^\infty\frac{\rho((x-y)s)}s e^{-ys}\td s\ge G(x,y).
\end{equation}
The equality in~\eqref{G(x-y)-integ-exp-A} is valid if and only if $x=y$. This gives a new proof of the fundamental and well known AG mean inequality.
\end{rem}

\end{document}